\newcommand{\Exp}[1]{{\mathbb{E}}\left[#1\right] }    
\newcommand{\R}{\mathbb{R}} 
\newcommand{\myNum}[1]{(\emph{#1})}
\definecolor{Gray}{gray}{0.2}
\colorlet{mypink}{red!40}
\colorlet{myblue}{cyan!40}
\definecolor{Gray}{gray}{0.75}
\newtheorem{theorem}{Theorem}  
\newtheorem{corollary}{Corollary} 
\newtheorem{remark}{Remark} 
\newcommand{\smartparagraph}[1]{\vspace{2pt} \noindent {\bf #1}}
\date{}
\title{Where Have All the Kaczmarz Iterates Gone?}
\author{El Houcine Bergou~\thanks{College of Computing, Mohammed VI Polytechnic University, Morocco, elhoucine.bergou@um6p.ma.}
\and
Soumia Boucherouite~\thanks{College of Computing, Mohammed VI Polytechnic University, Morocco, soumia.boucherouite@um6p.ma.}
\and
Aritra Dutta \thanks{Department of Mathematics, University of Central Florida, USA, {aritra.dutta@ucf.edu}.}
\and
Xin Li\thanks{Department of Mathematics, University of Central Florida, USA, {xin.li@ucf.edu}.} 
\and
Anna Ma \thanks{Department of Mathematics, University of California, Irvine, USA, {anna.ma@uci.edu}.}
}
\begin{document}
\maketitle  

\begin{abstract}
The randomized Kaczmarz (RK) algorithm is one of the most computationally and memory-efficient iterative algorithms for solving large-scale linear systems. However, practical applications often involve noisy and potentially inconsistent systems. While the convergence of RK is well understood for consistent systems, the study of RK on noisy, inconsistent linear systems is limited. This paper investigates the asymptotic behavior of RK iterates in expectation when solving noisy and inconsistent systems, addressing the locations of their limit points. We explore the roles of singular vectors of the (noisy) coefficient matrix and derive bounds on the convergence horizon, which depend on the noise levels and system characteristics. Finally, we provide extensive numerical experiments that validate our theoretical findings, offering practical insights into the algorithm's performance under realistic conditions. These results establish a deeper understanding of the RK algorithm's limitations and robustness in noisy environments, paving the way for optimized applications in real-world scientific and engineering problems.
\end{abstract}

\textbf{Keywords.}
Noisy linear systems, Randomized Kaczmarz algorithm, Iterative method, Generalized inverse, Least Squares solutions, Singular value decomposition. 

\textbf{AMS subject classes.}
15A06, 15A09, 15A10, 15A18, 65F10, 65Y20, 68Q25, 68W20, 68W40

\section{Introduction}

Solving systems of linear equations 
is one of the most common and fundamental tasks in many areas of science and engineering \cite{Bjrock1996, dutta2019online, chiu2014efficient, FRANK2016, WANG2019}. As problem sizes grow, computational and storage requirements increase significantly. Consequently, direct solvers such as LU decomposition, Cholesky, or QR factorization, which operate on the entire data matrix $A$, become too expensive and impractical due to their expensive memory usage and excessive floating-point operations \cite{Bjrock2015, saad2000, Bjrock1996, saad2003, Greenbaum1997}. In contrast to direct solvers, iterative methods \cite{needell2010randomized, ma2015convergence, sahu2020convergence} do not always require storing the entire matrix $A$ in memory. They start with an initial approximation, progressively refine it in successive iterations, and continue until an approximate solution of the desired accuracy is achieved. One such iterative method for solving large-scale consistent linear systems is the Kaczmarz algorithm \cite{Kaczmarz1937}. The Kaczmarz algorithm is a {\em row-action method} as it operates with only one row of $A$ at each iteration, making it compute- and memory-efficient. 

Consider a system of linear equations:
\begin{equation}\label{prb:consistent}
    Ax = b,
\end{equation}
where $A \in \R^{m\times n}$ and $b \in \R^m$ are given, and $x \in \R^n$ is an unknown vector. Let $a_i^\top$ denote the $i$-th row of $A$. The Kaczmarz algorithm starts from an initial point, $x_0$, and iterates by projecting the current estimate $x_k$ onto the solution space of a single equation, $a_{i(k)}^\top x=b_{i(k)}$, where 
$i(k)$ is selected cyclically from $\{1,2,...,m\}$. {More precisely, given an initialization $x_0$, the iterates are defined as follows}:

\begin{align}\label{eq:ka}
x_{k+1}=x_k-\frac{a_{i(k)}^\top x_k-b_{i(k)}}{\| a_{i(k)}\|^2} a_{i(k)},~k=0,1,2,\cdots,
\end{align}
where $i(k)=(k$ mod $m) + 1$. 

The Kaczmarz algorithm is often referred to as {\em cyclic Kaczmarz}. Kaczmarz~\cite{Kaczmarz1937} originally considered systems with square matrices and proved that when $A$ is nonsingular, the sequence of Kaczmarz iterates $\{x_k\}$ converges to the unique solution. However, \cite{Kaczmarz1937} did not provide a convergence rate. 

While it was observed that the convergence rate of {cyclic Kaczmarz} could suffer when consecutive rows of $A$ were close to identical, empirical observations suggested that random row selection accelerated the convergence by avoiding such worst-case orderings \cite{Feichtinger1992, Herman1993}. 
Strohmer and Vershynin \cite{Strohmer2013} validated these empirical findings by introducing the randomized Kaczmarz algorithm (RK), where at the $k$-th iteration, $i(k)$ is randomly selected from $\{1,2,...,m\}$ with probability proportional to the row norm of $a_{i(k)}$. They demonstrated that RK converges linearly in expectation to the unique solution of consistent, overdetermined linear systems, as follows.

\begin{theorem}[\cite{Strohmer2013}, Theorem 2]
\label{thm:SV}
Assume that (\ref{prb:consistent}) is consistent and $A$ is of full column rank. Let $x\in \R^n$ be the unique solution, and the initial point $x_0\in \R^n$ be arbitrary.

Let $i(k)$ be chosen from $\{1,2,...,m\}$ at random, with 
${\rm Prob}(i(k)=i)=\frac{\|a_{i}\|^2}{\|A\|_F^2}$ where $\|A\|_F$ denotes the Frobenius norm of matrix $A$. Then the sequence of the iterates $\{x_k\}$ \eqref{eq:ka} converges to the unique solution $x$ in expectation, and the expected error satisfies
\begin{equation*}
    \mathbb{E} \|x_k-x\|^2\leq \left(1-\frac{\sigma_{\rm min}^2}{\|A\|^2_F}\right)^k\|x_0-x\|^2,
\end{equation*}
where $\sigma_{\rm min}$ is the smallest singular value of $A$.
\end{theorem}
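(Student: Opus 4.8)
The plan is to track the error vector $e_k := x_k - x$ directly and to exploit the fact that, for a consistent system, each Kaczmarz step is an exact orthogonal projection of the error onto a hyperplane through the true solution. First I would use consistency, $b_{i(k)} = a_{i(k)}^\top x$, to rewrite the update \eqref{eq:ka} as
\begin{equation*}
e_{k+1} = \left(I - \frac{a_{i(k)} a_{i(k)}^\top}{\|a_{i(k)}\|^2}\right) e_k,
\end{equation*}
recognizing the matrix in parentheses as the orthogonal projector onto the hyperplane $\{v : a_{i(k)}^\top v = 0\}$. Since this is an orthogonal projection, the Pythagorean theorem yields the exact one-step decrease
\begin{equation*}
\|e_{k+1}\|^2 = \|e_k\|^2 - \frac{(a_{i(k)}^\top e_k)^2}{\|a_{i(k)}\|^2}.
\end{equation*}

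The crucial step is to take the expectation over the random index $i(k)$, conditioned on $x_k$, and to observe that the sampling probability $\|a_i\|^2 / \|A\|_F^2$ is tailored precisely to cancel the $\|a_i\|^2$ in the denominator. This gives
\begin{equation*}
\mathbb{E}\left[\|e_{k+1}\|^2 \mid x_k\right] = \|e_k\|^2 - \frac{1}{\|A\|_F^2}\sum_{i=1}^m (a_i^\top e_k)^2 = \|e_k\|^2 - \frac{\|A e_k\|^2}{\|A\|_F^2}.
\end{equation*}
I would then invoke the full column rank hypothesis through the singular value bound $\|A e_k\|^2 \geq \sigma_{\rm min}^2 \|e_k\|^2$, which holds for every vector and is nontrivial because full column rank forces $\sigma_{\rm min} > 0$, to obtain the contraction
\begin{equation*}
\mathbb{E}\left[\|e_{k+1}\|^2 \mid x_k\right] \leq \left(1 - \frac{\sigma_{\rm min}^2}{\|A\|_F^2}\right)\|e_k\|^2.
\end{equation*}

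Finally, I would apply the tower property of expectation to remove the conditioning and unroll the recursion over $k$ steps, starting from $e_0 = x_0 - x$, to reach the stated bound. The main obstacle — really the single conceptual insight that makes the whole argument work — is identifying that the row-norm-proportional sampling distribution is exactly what converts the expected squared projection into the clean quantity $\|A e_k\|^2 / \|A\|_F^2$; once this is in place, the singular value inequality and the geometric iteration are routine. A secondary point requiring care is the handling of degenerate events where $a_{i(k)}^\top e_k = 0$ and the projection does nothing, but such terms only improve the bound and need no special treatment.
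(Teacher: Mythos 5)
Your proposal is correct and complete: the orthogonal-projection identity, the cancellation of $\|a_i\|^2$ by the row-norm-proportional sampling to produce $\|Ae_k\|^2/\|A\|_F^2$, the bound $\|Ae_k\|^2 \geq \sigma_{\rm min}^2\|e_k\|^2$ from full column rank, and the tower-property unrolling are exactly the ingredients of the original argument of Strohmer and Vershynin. The paper itself states this theorem as background and defers to the citation \cite{Strohmer2013} without reproducing a proof, and your argument is essentially identical to the one in that reference.
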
  

Because noise is inevitable in practice due to the data-curation process and many other artifacts~\cite{machiels1998numerical,keller1964stochastic, du2002numerical, gajek2021errors,chiu2014efficient}, while solving for (\ref{prb:consistent}), we may only be given noisy versions of $A$ and $b$: $\widetilde{A}=A+E$ and $\widetilde{b}=b+\epsilon$, where $E \in \R^{m\times n}$ and $\epsilon \in \R^m$. Thus, it is essential to understand Kaczmarz iterates of the form
\begin{equation}
    \label{eq:rknoisy}
    x_{k+1}=x_k-\frac{\tilde{a}_{i(k)}^\top x_k-\tilde{b}_{i(k)}}{\| \tilde{a}_{i(k)}\|^2} \tilde{a}_{i(k)},~k=0,1,2,\cdots,
\end{equation}
when applied to
\begin{equation} \label{prb:inconsistent}
    \widetilde{A}x \approx \widetilde{b}.
\end{equation}

Generally, noisy linear systems \eqref{prb:inconsistent} are not guaranteed to be consistent. Without the consistency, we cannot apply the convergence results of Kaczmarz or Strohmer and Vershynin \cite{Strohmer2013}. One must wonder if the Kaczmarz algorithm is applied to an inconsistent system, where the Kaczmarz iterates would go.

In contrast to changing the algorithm for inconsistent systems, such as the randomized extended Kaczmarz algorithm~\cite{zouzias2013randomized}, this work focuses on understanding the behavior of the classical randomized Kaczmarz algorithm applied to noisy systems \eqref{prb:inconsistent}. When $E=0$ and $A$ is of full column rank, Needell \cite{needell2010randomized} proved that the iterates of RK applied to \eqref{prb:inconsistent} approach a ball centering at the least squares solution $x_{\rm LS}$ in expectation, where the radius of the ball is given by using the bound on the noise, $\|\epsilon\|_{\infty}$.~Zouzias and Freris \cite{zouzias2013randomized} dropped the full rank assumption and further {generalized} the results of \cite{Strohmer2013} and \cite{needell2010randomized}. For convenience, we restate the result of Zouzias and Freris below. 
 
\begin{theorem}[\cite{zouzias2013randomized}, Theorem 2.1]\label{thm:zouzias} Let $x_k$ be the RK iterates \eqref{eq:rknoisy} when RK algorithm is
applied the linear system~\eqref{prb:inconsistent} with $E=0$. Let $x_0\in {\rm range}(A^\top)$ and $x_{\rm LS}=A^{\dagger}b$. Then, the sequence $\{x_k\}$ satisfies
\begin{equation*}
   \mathbb{E}\|x_k-x_{\rm LS}\|^2\leq \left(1-\frac{\sigma_{\rm min}^2}{\|A\|^2_F}\right)^k\|x_0-x_{\rm LS}\|^2+\frac{\|\epsilon\|^2}{\sigma_{\rm min}^2}. 
\end{equation*}
where $\sigma_{\rm min}$ is the smallest non-zero singular value of $A$.
\end{theorem}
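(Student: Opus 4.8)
The plan is to track the error vector $d_k := x_k - x_{\rm LS}$ and show it obeys a contraction-plus-noise recursion in expectation. First I would use consistency of the clean system to simplify the residual: since $x_{\rm LS}=A^\dagger b$ with $b\in{\rm range}(A)$ satisfies $Ax_{\rm LS}=b$, we have $a_i^\top x_{\rm LS}=b_i$, so that $a_{i(k)}^\top x_k-\tilde b_{i(k)} = a_{i(k)}^\top d_k - \epsilon_{i(k)}$. Substituting into \eqref{eq:rknoisy} and writing $P_i := I - a_i a_i^\top/\|a_i\|^2$ for the orthogonal projector onto the hyperplane $a_i^\perp$, the update becomes $d_{k+1} = P_{i(k)} d_k + (\epsilon_{i(k)}/\|a_{i(k)}\|^2)\,a_{i(k)}$.

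The central computation is to expand $\|d_{k+1}\|^2$. Because $P_{i(k)}$ projects onto the orthogonal complement of $a_{i(k)}$, the cross term vanishes ($a_{i(k)}^\top P_{i(k)} = 0$), leaving $\|d_{k+1}\|^2 = \|P_{i(k)} d_k\|^2 + \epsilon_{i(k)}^2/\|a_{i(k)}\|^2$. Conditioning on $x_k$ and averaging over the row index with ${\rm Prob}(i(k)=i)=\|a_i\|^2/\|A\|_F^2$, the noise term collapses neatly: $\sum_i (\|a_i\|^2/\|A\|_F^2)(\epsilon_i^2/\|a_i\|^2)=\|\epsilon\|^2/\|A\|_F^2$, while the projection term gives $\|d_k\|^2 - \|Ad_k\|^2/\|A\|_F^2$ using the identity $\|P_i d_k\|^2 = \|d_k\|^2 - (a_i^\top d_k)^2/\|a_i\|^2$.

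The key step --- and the one that replaces the full-rank hypothesis of Theorem \ref{thm:SV} by the smallest nonzero singular value --- is to control $\|Ad_k\|^2$ from below. I would argue by induction that $d_k\in{\rm range}(A^\top)$ for every $k$: the base case holds since both $x_0$ and $x_{\rm LS}=A^\dagger b$ lie in ${\rm range}(A^\top)$, and each update adds only a multiple of $a_{i(k)}$ (a row of $A$, hence a row-space vector) to a vector already in ${\rm range}(A^\top)$, so the property is preserved. On ${\rm range}(A^\top)$ the operator $A$ is bounded below by its smallest nonzero singular value, giving $\|Ad_k\|^2 \ge \sigma_{\rm min}^2\|d_k\|^2$. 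Combining the pieces yields the one-step bound $\mathbb{E}[\|d_{k+1}\|^2\mid x_k] \le (1-\sigma_{\rm min}^2/\|A\|_F^2)\|d_k\|^2 + \|\epsilon\|^2/\|A\|_F^2$.

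Finally, setting $q := 1-\sigma_{\rm min}^2/\|A\|_F^2\in(0,1)$ and taking total expectation, I would unroll $\mathbb{E}\|d_{k+1}\|^2 \le q\,\mathbb{E}\|d_k\|^2 + \|\epsilon\|^2/\|A\|_F^2$ into $\mathbb{E}\|d_k\|^2 \le q^k\|d_0\|^2 + (\|\epsilon\|^2/\|A\|_F^2)\sum_{j=0}^{k-1}q^j$, and bound the geometric sum by $1/(1-q)=\|A\|_F^2/\sigma_{\rm min}^2$, which converts the constant into the stated $\|\epsilon\|^2/\sigma_{\rm min}^2$. The main obstacle is the range-invariance claim: without it the component of $d_k$ in $\ker(A)$ never contracts, $\|Ad_k\|^2$ cannot be bounded below by $\sigma_{\rm min}^2\|d_k\|^2$, and the argument collapses; ensuring $x_0\in{\rm range}(A^\top)$ and verifying that this subspace is preserved under the noisy update is exactly what legitimizes dropping the full-rank assumption.
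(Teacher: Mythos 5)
Your proof is correct and follows essentially the same route as the source the paper cites for this result (the paper itself states Theorem~\ref{thm:zouzias} as a quotation of \cite{zouzias2013randomized} without reproducing its proof): orthogonal split of the error update so the cross term vanishes, row-norm-weighted expectation, the induction that $x_k - x_{\rm LS}$ stays in ${\rm range}(A^\top)$ so that $\|Ad_k\|^2 \geq \sigma_{\rm min}^2\|d_k\|^2$ with $\sigma_{\rm min}$ the smallest \emph{nonzero} singular value, and a geometric-sum unrolling. The only point worth flagging is that your identity $a_i^\top x_{\rm LS} = b_i$ relies on consistency of the underlying system $Ax=b$, which is indeed implicit in the paper's framing (see the discussion around \eqref{prb:consistent} and Table~\ref{table1}), so your argument is complete as written.
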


We remark that the guarantees shown in Theorem~\ref{thm:SV} can be seen as a special case of Theorem~\ref{thm:zouzias} by taking $\epsilon=0$. While previous works only consider noise on the right-hand side (i.e., with $E = 0$),
Bergou et al.~\cite{doubly_noisy_RK} provide convergence analysis of RK in general noisy cases, including $E\neq 0$ and $\epsilon\neq 0$. Their result can be stated as follows.
\begin{theorem} [\cite{doubly_noisy_RK}, Theorem 3.1] \label{thm:bergou} {Let $A x = b$ be a fixed consistent system with solution } $x_{\rm LS}= A^{\dagger}b$ and let $x_0-x_{\rm LS}\in {\rm range}(\widetilde A^\top)$. Then, the sequence $\{x_k\}$ obtained in (\ref{eq:rknoisy}) by applying the RK algorithm to \eqref{prb:inconsistent} {where $E = \tilde{A} - A$ and $\epsilon = \tilde{b} - b$ satisfies}
\begin{equation*}
  \mathbb{E}\|x_k-x_{\rm LS}\|^2 \leq \left(1-\frac{\widetilde{\sigma}_{\rm min}^2}{\|\widetilde{A}\|^2_F}\right)^k \|x_0-x_{\rm LS}\|^2+\frac{\|Ex_{\rm LS}-\epsilon\|^2}{\widetilde{\sigma}_{\rm min}^2}.   
\end{equation*}
where $\widetilde{\sigma}_{\rm min}$ is the smallest non-zero singular value of $\widetilde A$.
\end{theorem}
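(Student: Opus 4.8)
The plan is to track the error vector $d_k \eqdef x_k - x_{\rm LS}$ and show it obeys a contraction-plus-constant recursion in expectation, then unroll the resulting geometric series. Write $\tilde a_i = a_i + e_i$ and $\tilde b_i = b_i + \epsilon_i$, where $e_i^\top$ is the $i$-th row of $E$ and $\epsilon_i$ the $i$-th entry of $\epsilon$. The decisive first step is to rewrite the noisy residual at $x_k$ by inserting $x_{\rm LS}$:
\begin{equation*}
\tilde a_{i(k)}^\top x_k - \tilde b_{i(k)} = \tilde a_{i(k)}^\top\big(x_k - x_{\rm LS}\big) + \big(\tilde a_{i(k)}^\top x_{\rm LS} - \tilde b_{i(k)}\big).
\end{equation*}
Because $Ax_{\rm LS}=b$, i.e.\ $a_i^\top x_{\rm LS}=b_i$ for every $i$, the bracketed term collapses to $e_i^\top x_{\rm LS} - \epsilon_i = (Ex_{\rm LS}-\epsilon)_i$, a fixed, iterate-independent quantity. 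Denoting $r \eqdef Ex_{\rm LS}-\epsilon$ with entries $r_i$, the update becomes $d_{k+1} = (I - P_{i(k)})\,d_k - \frac{r_{i(k)}}{\|\tilde a_{i(k)}\|^2}\,\tilde a_{i(k)}$, where $P_i = \tilde a_i\tilde a_i^\top/\|\tilde a_i\|^2$ is the orthogonal projector onto $\mathrm{span}(\tilde a_i)$.

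Next I would expand $\|d_{k+1}\|^2$. The cross term vanishes because $(I-P_i)\tilde a_i = 0$, leaving $\|d_{k+1}\|^2 = \|(I-P_i)d_k\|^2 + r_i^2/\|\tilde a_i\|^2$. Taking the conditional expectation over $i(k)$ with the same row-norm sampling probabilities $\|\tilde a_i\|^2/\|\widetilde A\|_F^2$ used in Theorems~\ref{thm:SV} and~\ref{thm:zouzias}, the weights cancel the $\|\tilde a_i\|^2$ denominators so that the noise term averages to $\|r\|^2/\|\widetilde A\|_F^2 = \|Ex_{\rm LS}-\epsilon\|^2/\|\widetilde A\|_F^2$, while the projection term yields $\|d_k\|^2 - \|\widetilde A d_k\|^2/\|\widetilde A\|_F^2$ via $\sum_i (\tilde a_i^\top d_k)^2 = \|\widetilde A d_k\|^2$.

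To convert $\|\widetilde A d_k\|^2$ into a multiple of $\|d_k\|^2$ I need the lower bound $\|\widetilde A v\|^2 \ge \widetilde\sigma_{\rm min}^2 \|v\|^2$, which holds only for $v \in \mathrm{range}(\widetilde A^\top)$ since $\widetilde A$ may be rank-deficient. This is precisely where the hypothesis $d_0 = x_0 - x_{\rm LS}\in\mathrm{range}(\widetilde A^\top)$ enters, and I would discharge it by an invariance argument: each $\tilde a_{i}$ lies in $\mathrm{range}(\widetilde A^\top)$, so both $(I-P_{i})d_k$ and the correction $\tilde a_{i}$ stay in that subspace, whence $d_k\in\mathrm{range}(\widetilde A^\top)$ for all $k$ by induction. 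This gives the one-step bound
\begin{equation*}
\mathbb{E}\big[\|d_{k+1}\|^2 \,\big|\, d_k\big] \le \Big(1-\tfrac{\widetilde\sigma_{\rm min}^2}{\|\widetilde A\|_F^2}\Big)\|d_k\|^2 + \tfrac{\|Ex_{\rm LS}-\epsilon\|^2}{\|\widetilde A\|_F^2}.
\end{equation*}

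Finally I would take total expectations and unroll with $\rho \eqdef 1-\widetilde\sigma_{\rm min}^2/\|\widetilde A\|_F^2$, bounding the geometric sum $\sum_{j=0}^{k-1}\rho^j \le 1/(1-\rho) = \|\widetilde A\|_F^2/\widetilde\sigma_{\rm min}^2$, which absorbs the $\|\widetilde A\|_F^2$ factor and produces the stated horizon $\|Ex_{\rm LS}-\epsilon\|^2/\widetilde\sigma_{\rm min}^2$. The only genuinely delicate point is the first step: recognizing that evaluating the residual relative to the \emph{clean} solution $x_{\rm LS}$ (rather than a noisy surrogate such as $\widetilde A^\dagger\widetilde b$) makes the effective noise $Ex_{\rm LS}-\epsilon$ constant across iterations. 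Everything after that is the standard Kaczmarz cross-term cancellation together with the range-invariance bookkeeping needed to handle the possible rank deficiency of $\widetilde A$.
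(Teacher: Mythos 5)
Your proposal is correct, and it is essentially the argument behind the cited result: the paper itself states this theorem without proof (importing it from \cite{doubly_noisy_RK}), but both that source and the paper's own proof of Theorem~\ref{thm:singular value additive noisy case} rest on exactly your key step of splitting the noisy residual as $\tilde a_{i(k)}^\top(x_k-x_{\rm LS}) + (Ex_{\rm LS}-\epsilon)_{i(k)}$, followed by the orthogonality of the Kaczmarz update, row-norm-weighted expectation, the range-invariance induction that justifies $\|\widetilde A d_k\|^2\ge\widetilde\sigma_{\rm min}^2\|d_k\|^2$, and the geometric-series unrolling. No gaps: your handling of the possible rank deficiency of $\widetilde A$ via the hypothesis $x_0-x_{\rm LS}\in{\rm range}(\widetilde A^\top)$ is precisely what the theorem's assumption is for.
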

Since the convergence of $\{x_k\}$ implies the limit must be a solution to the system, it follows that the Kaczmarz iterates $\{x_k\}$ fail to converge if the underlying system is inconsistent.
Following Needell~\cite{needell2010randomized}, all works on noisy systems have used the LS solution $x_{\rm LS}=A^\dagger b$ of the underlying consistent noiseless system as a reference point for the limiting behavior of the Kaczmarz iterates.
This serves the purpose of measuring how far the Kaczmarz iterates of the noisy system are from the limit of the corresponding consistent system: they are eventually approaching a ball centered at $x_{\rm LS}$ and of radius given by the level of noise terms. 
We will refer to this radius as the {\it convergence horizon}, or simply {\it horizon}, following \cite{needell2010randomized} and \cite{ma2015convergence}. But, in terms of locating all the limiting points of the Kaczmarz iterates, $x_{\rm LS}$ is not the best point to use as the center of a ball that contains all the limit points. Indeed, we will show that $\widetilde{x}_{\rm LS}=\tilde{A}^\dagger \tilde{b}$, the LS solution of the noisy system, is the center that yields the smallest horizon. Figure~\ref{fig:circles1} shows one such scenario---the plot on the left shows three possible locations of $x_{\rm LS}$ for three possible consistent systems, but their radius are all larger than the one corresponding to $\widetilde{x}_{\rm LS}$.

{Another issue with Theorems 1.2 and 1.3 is their requirements on the initial points.} The above results require $x_0\in {\rm range}(A^\top)${ when $E=0$. This can be easily satisfied by taking $x_0 = 0$ or $x_0 = a_i$ for some row $i$. Unfortunately, this is not true for the case when $E \neq 0$, which requires} $x_0-x_{\rm LS}\in {\rm range}({\widetilde{A}}^\top)$, {as $x_{\rm LS}$ is unknown apriori}.

\begin{figure}[t]
    \centering
    \includegraphics[width=0.45\textwidth]{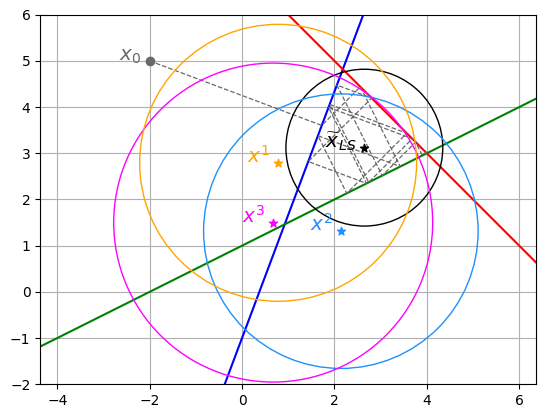}
    \includegraphics[width=0.45\textwidth]{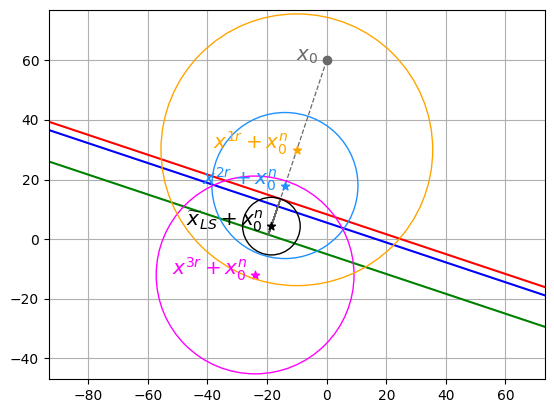}
    \caption{\textbf{Path of the RK iterates.} RK is applied to $\widetilde Ax \approx \widetilde b$ for $\widetilde A \in \R^{3\times 2}$. {We consider the cases in which (left) ${\rm rank}(\widetilde A)=2$ and $x_0,x^1,x^2,x^3 \in {\rm range}(\widetilde A ^\top)$ and (right) ${\rm rank}(\widetilde A)=1$ and $x_0,x^1,x^2,x^3 \notin {\rm range}(\widetilde A ^\top)$.} {Each line represents a solution space determined by a row of the linear equalities }$\widetilde Ax \approx \widetilde b$. {The point} $\widetilde x_{\rm LS}=\widetilde A^\dagger \widetilde b$ and $x^1,x^2,x^3$ are arbitrary. The circles are of centers $x_0^n+x_*^r$ and radius $\|\widetilde A \widetilde x_* - \widetilde b \|/\widetilde{\sigma}_{min}$ for $x_*=\widetilde x_{\rm LS},x^1,x^2,x^3$. The intersection of all the circles is going to be where all the cluster points are.} 
  \label{fig:circles1}
\end{figure}

We will analyze the limit points of RK iterates for an arbitrary initialization when applied to the noisy system \eqref{prb:inconsistent}. In doing so, we accomplish two feats. First, we remove the assumption that $x_0-x_{\rm LS}\in {\rm range}({\widetilde{A}}^\top)$ by analyzing the limiting behavior to an arbitrary reference point. Second, we characterize the ball of smallest horizon to demonstrate that the optimal horizon is achieved by setting the reference point to be the least squares solution of the noisy system. This implies that although there may be an underlying consistent system $Ax = b$ which has been perturbed to $\widetilde{A} x = \widetilde{b}$, without additional adaptations of the algorithm, in general, the best one can hope for is the least squares solution of the noisy system. However, this also shows that there are special cases in which one can still attain the solution to the original consistent system $Ax = b$ even if RK is applied to \eqref{prb:inconsistent}. In particular, for specific choices of noise $E$ and $\epsilon$, the convergence horizon can be avoided altogether.

Taken together, our contributions are summarized as follows:

\myNum{i} \smartparagraph{Convergence analysis for RK with respect to arbitrary reference points and arbitrary noise in data.} 
In {Section~\ref{sec:refpoints}}, we {show a generalization of} Theorems~\ref{thm:zouzias} and \ref{thm:bergou} that will de-emphasize the role of any particular underlying consistent system 
and accommodate an arbitrary initial point $x_0$; see a summary in Table \ref{table1}.~We also present a refined convergence along singular vectors analysis using the Steinerberger linearized method~\cite{steinerberger2021randomized} in Section~\ref{sec:singular_vectors}.

\myNum{ii}\smartparagraph{Characterization of the limiting points of the iterates of RK.} We leverage the arbitrary reference point analysis to show that, for a fixed perturbed system, the ball of the smallest radius is attained via the least squares solution to the noisy system. These results are presented in Section~\ref{sec:bounding_balls}. {Lastly, in {Section~\ref{sec:numerics}}, we present numerical experiments that validate our theoretical results and demonstrate improved convergence bounds in special cases.}

\subsection{Notations}\label{notations}
{A vector} $x$ is always referring to a column vector with $x^\top$ as its transpose, and $\|x\|$ denotes its $\ell_2$-norm. 
For a matrix, $M \in \R^{m\times n}$, $M^\top$ and $M^\dagger$ are the transpose and the pseudo{-}inverse of $M$, respectively. 
We denote the spectral and Frobenius norms of $M$ by $\|M\|$ and $\|M\|_F$, respectively. 
The condition number of $M$ is denoted as $\kappa(M)$. 

We denote $\sigma_{\rm min}=\sigma_r$ as the smallest {nonzero} singular value and $\sigma_1$ as the largest singular value of {$A$}, where $r$ is the rank of {$A$}. {To distinguish between the singular values of the noiseless matrix $A$ and noisy matrix $\widetilde{A}$, we use} $\widetilde{\sigma}$, with appropriate subscript.
For any vector $x\in \R^n$, we denote by $v^r$ and $v^n$ the the orthogonal projections of vector $v$ into ${\rm range}(\widetilde A^\top)$ and its orthogonal complement ${\rm null}(\widetilde A)$, respectively.
For a linear system, $Ax=b$, $x_{\rm LS}$ denotes the least squares (LS) solution with the minimum norm, and we have $x_{\rm LS} = A^\dagger b$. {Lastly, we simplify the notation for the sequence of RK iterates $\{x_k\}_{k=0}^\infty$ to $\{x_k\}$ throughout.}

\def\buildTable#1{%
    \pgfplotstableread[col sep = comma]{#1}\rawdata%
    \pgfplotstabletypeset[before row= \midrule]\rawdata
}%

\begin{table}
\footnotesize
\centering
\setlength{\tabcolsep}{1.72pt}
\begin{tabular}{c c c c c c}  

\multicolumn{6}{c}{} \\
 \toprule
         &                 &  &{Convergence} & {Convergence}  & \\
Quantity & Linear system   & Assumptions & rate          & horizon        & Reference\\
 \toprule
 
$\mathbb{E}[\|x_k-x_{\rm LS}\|^2]$ & $Ax\approx \tilde{b}$ & $A$ is full-rank & $(1-\frac{1}{R})$ & $R (\max_i \frac{\epsilon_i}{\|A_i\|})^2 $ & \cite{needell2010randomized},\\
& & & & & \scriptsize{Theorem 2.1}\\
\midrule

$\mathbb{E}[\|x_k-x_{\rm LS}\|]$ & $Ax\approx \tilde{b}$ & $x_0 \in {\rm range} (A^\top)$ & $(1-\frac{1}{R})$ & $\frac{\|\epsilon \|^2}{\sigma_{\rm min}^2}$ & \cite{zouzias2013randomized}, \\
& & & & & \scriptsize{Theorem 2.1}\\
\midrule

$\mathbb{E}[\|x_k-x_{\rm LS}\|^2]$ & $\tilde{A} x \approx \tilde{b}$ &  $x_0\!- \!x_{\rm LS} \!\in \!{\rm range} ( \tilde{A}^\top \!)$ & $(1-\frac{1}{\tilde{R}})$ & $\frac{\|Ex_{LS} - \epsilon \|^2}{\tilde{\sigma}_{\rm min}^2}$ & \cite{doubly_noisy_RK},\\
& & & & & \scriptsize{Theorem 3.1}\\
\midrule
\rowcolor{Gray}
$\mathbb{E}[\|x_k-x_0^n-x_*^r\|^2]$ &  $ \tilde{A} x \approx \tilde{b}$ & No assumptions & $(1-\frac{1}{\tilde{R}})$ & $\frac{\| \tilde{A} x_{*} - \tilde{b} \|^2}{\tilde{\sigma}_{\rm min}^2}$ &  \scriptsize{This work,}\\ 
\rowcolor{Gray}& & & & & \scriptsize{Theorem \ref{thm:DoublyNoisy_x}}\\
\bottomrule


\end{tabular}
\caption{\textbf{Summary of RK applied to noisy linear systems.} Here $A$ and $b$ are the true matrix and vector, respectively, and $Ax = b$ is consistent, $ \tilde{A}=A+E$ and $ \tilde{b}=b+\epsilon$ are the noisy data, $R= \|A \|^2_F/\sigma_{\min}^2$, and
$\tilde{R}= \|\tilde{A} \|^2_F/\tilde{\sigma}_{\min}^2$ with  $\sigma_{\rm min}$ and $\tilde{\sigma}_{\rm min}$ being the smallest non-zero singular values of $A$ and $\tilde{A}$ respectively. The vectors, $x_0, x_* \in \R^n$, are arbitrary in the last row.}
\label{table1}
\end{table}


\section{Using arbitrary reference points}
\label{sec:refpoints}

This work takes two novel perspectives. In particular, previous works analyzing doubly noisy systems assume that there is an underlying consistent system that is perturbed by noise. In this work, we adopt a different perspective: we start with a possibly inconsistent system \eqref{prb:inconsistent}, then choose a consistent system \eqref{prb:consistent}, and we define $E$ and $\epsilon$ by 
\begin{equation*}
  E=\widetilde{A}-A~~{\rm and}~~\epsilon=\widetilde{b}-b.  
\end{equation*}
From this perspective, we can focus on possible limit points (see Theorem~\ref{thm:DoublyNoisy_x} and \ref{thm:singular value additive noisy case}) and ask for the best choices of $A$ and $b$ (see Theorem~\ref{thm:smallest-ball} and Corollary~\ref{thm:smallest-ball2}).

\begin{figure}
    \centering
    \begin{subfigure}[t] 
    {0.5\textwidth}
        \centering
    \includegraphics[width=\textwidth]{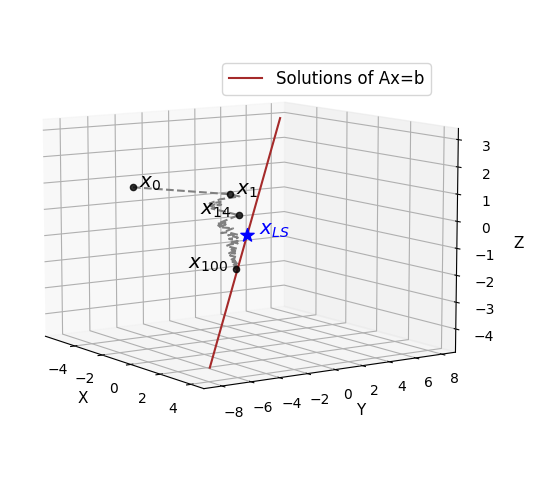}
    \caption{}
    \label{subfig_a}
    \end{subfigure}%
    \begin{subfigure}[t]{0.45\textwidth}
        \centering
    \includegraphics[width=\textwidth]{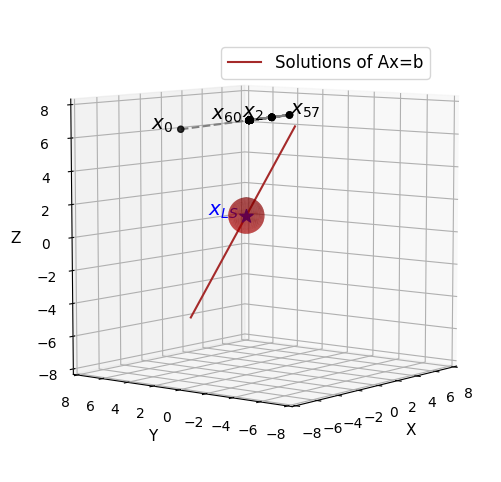}
    \caption{}
    \label{subfig_b}
    \end{subfigure}%
    \caption{\textbf{Path of RK iterates.} The system, $Ax=b$ is consistent with $A \in \R^{6 \times 3}$ of {rank $2$} and $x_{\rm LS}=A^\dagger b$. 
    (a) RK applied to $Ax = b$ with $x_0 \notin {\rm Range} ({A}^\top)$. 
    (b) RK applied to $\tilde{A}x\approx \tilde{b}$ with $\tilde{A} = A+E$, $\tilde{b} = b+\epsilon$, and $x_0 - x_{\rm LS} \notin {\rm Range} ( \tilde{A}^\top)$. The radius of the ball centered around $x_{\rm LS}$ is $\|Ex_{\rm LS}-\epsilon\|/{\tilde{\sigma}_{\rm min}}$.\vspace{-1.2em}}
    \label{fig:path_noiseFree_notinRange}
\end{figure}

While the literature focuses on considering the least squares solution $x_{\rm LS}$ as the reference point for the RK iterates, RK does not always converge to this point, particularly for noisy or inconsistent systems. Additionally, there are assumptions on the initial points. For example, we note that Theorems~\ref{thm:zouzias} and \ref{thm:bergou} require either $x_0\in {\rm range} (A^\top)$ {when $E = 0$}  or $ x_0-x_{\rm LS} \in {\rm range} (\widetilde A^\top)$, where $x_{\rm LS}$ is the least squares solution of the underlying consistent system, {when $E \neq 0$}. These requirements are necessary for estimates as demonstrated by Figures~\ref{subfig_a} and \ref{subfig_b}. They show the paths of the iterates of RK applied to simple examples. In the consistent case ($E=0$ and $\epsilon = 0$), the iterates converge to a limit different from $x_{\rm LS}$; in the inconsistent case, the convergence horizon given by Theorem~\ref{thm:bergou} is represented by a ball centered around $x_{\rm LS}$, but still the iterates $\{x_k\}$ stay strictly outside the ball.

We need to remove the requirements on the initialization point $\{x_0\}$ so that we can
introduce an arbitrary reference point for RK iterates $\{x_k\}$.
Theorem~\ref{thm:DoublyNoisy_x0} below presents a simple formulation incorporating the initial point into the reference point for the RK iterates, which describes the clustering of the RK iterates to the reference point $x_0^n + x_{\rm LS}^r$, where $x_0^n$ denotes the projection onto the null space of $x_0$ and $x_{\rm LS}^r$ the projection of {$x_{\rm LS}$} onto the row space of $\widetilde{A}$,  
up to a horizon depending on the noise in $A$ and $b$. 

\begin{theorem}\label{thm:DoublyNoisy_x0}
Let $\{x_k\}$ be the iterates obtained  in (\ref{eq:rknoisy}) when the RK algorithm is applied to~\eqref{prb:inconsistent} where $\widetilde{A}$ and $\widetilde{b}$ are fixed. Let $x_0$ be arbitrary. We have, {for any $A \in \mathbb{R}^{m \times n}$ and any $b \in \mathbb{R}^{m}$,}
\begin{equation}\label{rate1_x0}
 \mathbb{E}\| x_{k} -x_0^n- x_{\rm LS}^r\|^2 \leq \left(1-\frac{1}{\widetilde{R}}\right)^{k}  \| x_0^r-x_{\rm LS}^r \|^2+ \frac{\| E x_{\rm LS} - \epsilon \|^2}{\widetilde{\sigma}_{\rm min}^2}, 
\end{equation}    
where $\widetilde{R} = \|\widetilde A\|^2_F/\widetilde{\sigma}_{\rm min}^2$, $x_{\rm LS}=A^\dagger b$, {$E = \widetilde{A} - A$ and $\epsilon = \widetilde{b} - b$}.
\end{theorem}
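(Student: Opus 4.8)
The plan is to reduce the claim to a standard noisy randomized Kaczmarz recursion that lives entirely in $\mathrm{range}(\widetilde{A}^\top)$. The first and conceptually key step is to notice that every update in~\eqref{eq:rknoisy} moves $x_k$ only along $\tilde{a}_{i(k)}$, and since $\tilde{a}_{i(k)}\in\mathrm{range}(\widetilde{A}^\top)$, the null-space component of the iterate is frozen: $x_k^n=x_0^n$ for all $k$. Hence $x_k - x_0^n - x_{\rm LS}^r = x_k^r - x_{\rm LS}^r$, so the left-hand side of~\eqref{rate1_x0} is exactly $\mathbb{E}\|x_k^r - x_{\rm LS}^r\|^2$. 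This is precisely what lets us drop the initialization hypotheses of Theorems~\ref{thm:zouzias} and~\ref{thm:bergou}: by shifting the reference point from $x_{\rm LS}$ to $x_0^n + x_{\rm LS}^r$ we absorb the (otherwise uncontrolled) null-space mismatch $x_0^n - x_{\rm LS}^n$ instead of assuming it to vanish.

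The second ingredient is an identity for the residual of $x_{\rm LS}$ measured in the noisy system. Writing $\widetilde{A}x_{\rm LS}-\widetilde{b}=(A x_{\rm LS}-b)+(E x_{\rm LS}-\epsilon)$ and using that the reference system $Ax=b$ is consistent (so $A x_{\rm LS}=A A^\dagger b=b$), we get $\widetilde{A}x_{\rm LS}-\widetilde{b}=E x_{\rm LS}-\epsilon$. This is what ultimately produces the horizon $\|E x_{\rm LS}-\epsilon\|^2/\widetilde{\sigma}_{\rm min}^2$.

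With these in hand I would set $d_k := x_k^r - x_{\rm LS}^r \in \mathrm{range}(\widetilde{A}^\top)$ and $r_i := (\widetilde{A}x_{\rm LS}-\widetilde{b})_i$. Because $\tilde{a}_i$ is orthogonal to $\mathrm{null}(\widetilde{A})$, one has $\tilde{a}_i^\top x_k = \tilde{a}_i^\top x_k^r$ and $\tilde{a}_i^\top x_{\rm LS}=\tilde{a}_i^\top x_{\rm LS}^r$, so the numerator in~\eqref{eq:rknoisy} equals $\tilde{a}_i^\top d_k + r_i$. Expanding the squared update and using the algebraic collapse $2u(u+r_i)-(u+r_i)^2=u^2-r_i^2$ with $u=\tilde{a}_i^\top d_k$ yields the clean recursion
\[
\|d_{k+1}\|^2 = \|d_k\|^2 - \frac{(\tilde{a}_i^\top d_k)^2}{\|\tilde{a}_i\|^2} + \frac{r_i^2}{\|\tilde{a}_i\|^2}.
\]
Taking the conditional expectation over $i$ with $\mathrm{Prob}(i(k)=i)=\|\tilde{a}_i\|^2/\|\widetilde{A}\|_F^2$ turns the middle term into $\|\widetilde{A}d_k\|^2/\|\widetilde{A}\|_F^2$ and the last into $\|\widetilde{A}x_{\rm LS}-\widetilde{b}\|^2/\|\widetilde{A}\|_F^2=\|E x_{\rm LS}-\epsilon\|^2/\|\widetilde{A}\|_F^2$. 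Crucially, since $d_k$ lies in the row space, the singular-value bound $\|\widetilde{A}d_k\|^2\ge\widetilde{\sigma}_{\rm min}^2\|d_k\|^2$ applies, giving the one-step contraction $\mathbb{E}_i\|d_{k+1}\|^2\le(1-1/\widetilde{R})\|d_k\|^2+\|E x_{\rm LS}-\epsilon\|^2/\|\widetilde{A}\|_F^2$.

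Finally I would take full expectations, unroll the recursion, and bound the geometric series by $\sum_{j\ge0}(1-1/\widetilde{R})^j=\widetilde{R}=\|\widetilde{A}\|_F^2/\widetilde{\sigma}_{\rm min}^2$, which converts the per-step noise coefficient into the stated horizon $\|E x_{\rm LS}-\epsilon\|^2/\widetilde{\sigma}_{\rm min}^2$ and leaves the transient $(1-1/\widetilde{R})^k\|d_0\|^2$ with $d_0=x_0^r-x_{\rm LS}^r$. The algebra is the standard noisy-RK estimate and should present no real difficulty; the only genuinely new point requiring care — and the reason the initialization assumption can be removed — is the null-space invariance $x_k^n=x_0^n$ together with $d_k\in\mathrm{range}(\widetilde{A}^\top)$, which is exactly what legitimizes the lower bound $\|\widetilde{A}d_k\|\ge\widetilde{\sigma}_{\rm min}\|d_k\|$ with no hypothesis on $x_0$.
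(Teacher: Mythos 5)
Your argument is correct in all its mechanics, and in substance it is exactly the argument the paper gestures at but never writes down: the paper obtains Theorem~\ref{thm:DoublyNoisy_x0} as the special case $x_*=x_{\rm LS}$ of Theorem~\ref{thm:DoublyNoisy_x}, and proves the latter only by remarking that the proof of \cite[Thm.~3.1]{doubly_noisy_RK} survives replacing $x_{\rm LS}$ by an arbitrary reference point, with ``details left to the reader.'' Your proof supplies precisely those details: the null-space freezing $x_k^n=x_0^n$, the collapse identity $2u(u+r_i)-(u+r_i)^2=u^2-r_i^2$, the conditional expectation computation, the lower bound $\|\widetilde{A}d_k\|^2\geq \widetilde{\sigma}_{\rm min}^2\|d_k\|^2$ (legitimate exactly because $d_k\in{\rm range}(\widetilde{A}^\top)$), and the geometric-series unrolling are all correct. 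Note also that your recursion, run with $r_i=(\widetilde{A}x_*-\widetilde{b})_i$ for an arbitrary $x_*$ and stopped \emph{before} invoking the consistency identity, proves the more general Theorem~\ref{thm:DoublyNoisy_x} verbatim; consistency is needed only to rewrite $\|\widetilde{A}x_{\rm LS}-\widetilde{b}\|$ as $\|Ex_{\rm LS}-\epsilon\|$. So your direct route recovers the paper's stronger result at no extra cost, and is more self-contained than the paper's proof-by-citation.

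One discrepancy deserves attention. Your proof uses that $Ax=b$ is consistent (you need $Ax_{\rm LS}=b$, i.e.\ $b\in{\rm range}(A)$), whereas the theorem as stated reads ``for any $A\in\mathbb{R}^{m\times n}$ and any $b\in\mathbb{R}^m$.'' This is not a defect of your argument; the literal statement is false without consistency. Indeed, take $A=\widetilde{A}$ and $b=\widetilde{b}$ with $\widetilde{A}x=\widetilde{b}$ inconsistent: then $E=0$, $\epsilon=0$, $x_{\rm LS}=\widetilde{x}_{\rm LS}$, and \eqref{rate1_x0} would force $\mathbb{E}\|x_k-x_0^n-\widetilde{x}_{\rm LS}\|^2\to 0$. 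But your own one-step identity (which holds with equality before any bounding) gives
\begin{equation*}
\mathbb{E}_i\|d_{k+1}\|^2=\|d_k\|^2-\frac{\|\widetilde{A}d_k\|^2}{\|\widetilde{A}\|_F^2}+\frac{\|\widetilde{A}\widetilde{x}_{\rm LS}-\widetilde{b}\|^2}{\|\widetilde{A}\|_F^2}\geq \frac{\|\widetilde{A}\widetilde{x}_{\rm LS}-\widetilde{b}\|^2}{\|\widetilde{A}\|_F^2}>0,
\end{equation*}
since $\|\widetilde{A}d_k\|^2\leq\widetilde{\sigma}_1^2\|d_k\|^2\leq\|\widetilde{A}\|_F^2\|d_k\|^2$, contradicting convergence to zero. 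The paper's surrounding discussion (Section~\ref{sec:refpoints} and the caption of Table~\ref{table1}) makes clear that $(A,b)$ is meant to range over consistent systems only, and that is the statement your proof establishes. You should state the consistency hypothesis explicitly at the outset rather than introducing it silently mid-proof --- as written, your proof and the theorem you claim to prove do not match, even though the mismatch is the statement's fault, not yours.
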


As it turns out, this follows from a more general result given in the following theorem,
which considers clustering to the projection of an arbitrary reference point $x_*$.
\begin{theorem}\label{thm:DoublyNoisy_x}
Let $\{x_k\}$ be the iterates obtained  in (\ref{eq:rknoisy}) when the RK algorithm is applied to~\eqref{prb:inconsistent} where $\widetilde{A}$ and $\widetilde{b}$ are fixed. 
Let $x_0$ be arbitrary. We have, for any $x_* \in \R^n$,
\begin{equation}\label{rate1}
 \mathbb{E}\| x_{k} -x_0^n- x_*^r\|^2 \leq \left(1-\frac{1}{\widetilde{R}}\right)^{k} \| x_0^r-x_*^r \|^2+ \frac{\| \widetilde{A} x_* - \widetilde{b} \|^2}{\widetilde{\sigma}_{\rm min}^2}, 
\end{equation}    
where $\widetilde{R} = \|\widetilde A\|^2_F/\widetilde{\sigma}_{\rm min}^2$.
\end{theorem}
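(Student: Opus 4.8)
The plan is to reduce everything to the row space $\mathrm{range}(\widetilde A^\top)$ and then run a one-step contraction argument in the spirit of Strohmer--Vershynin and Zouzias--Freris, but keyed to the arbitrary reference point $x_*$ rather than to a least squares solution. The first observation is that every increment $x_{k+1}-x_k$ is a scalar multiple of the sampled row $\tilde{a}_{i(k)}\in\mathrm{range}(\widetilde A^\top)$, so the null-space component is frozen: $x_k^n=x_0^n$ for all $k$. Consequently $x_k-x_0^n-x_*^r=x_k^r-x_*^r$, and it suffices to control $e_k:=x_k^r-x_*^r$, which stays in $\mathrm{range}(\widetilde A^\top)$. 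Since $\tilde{a}_{i(k)}$ is orthogonal to $\mathrm{null}(\widetilde A)$, we have $\tilde{a}_{i(k)}^\top x_k=\tilde{a}_{i(k)}^\top x_k^r$ and $\tilde{a}_{i(k)}^\top x_*=\tilde{a}_{i(k)}^\top x_*^r$, so expressing $\tilde{b}_{i(k)}$ through the $i(k)$-th entry of the reference residual $\widetilde A x_*-\widetilde b$ lets me rewrite the update purely in terms of $e_k$.

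Next I would derive the exact recursion. Letting $P_i=\tilde{a}_i\tilde{a}_i^\top/\|\tilde{a}_i\|^2$ be the orthogonal projector onto $\mathrm{span}(\tilde{a}_i)$, the update becomes
\[
e_{k+1}=(I-P_{i(k)})e_k-\frac{(\widetilde A x_*-\widetilde b)_{i(k)}}{\|\tilde{a}_{i(k)}\|^2}\,\tilde{a}_{i(k)}.
\]
The key point is that $(I-P_{i(k)})e_k$ lies in the orthogonal complement of $\tilde{a}_{i(k)}$ while the second term is parallel to $\tilde{a}_{i(k)}$, so the two are orthogonal and the cross term vanishes \emph{deterministically}, not merely in expectation. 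Pythagoras then gives
\[
\|e_{k+1}\|^2=\|e_k\|^2-\frac{(\tilde{a}_{i(k)}^\top e_k)^2}{\|\tilde{a}_{i(k)}\|^2}+\frac{(\widetilde A x_*-\widetilde b)_{i(k)}^2}{\|\tilde{a}_{i(k)}\|^2}.
\]

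I would then take the conditional expectation using the sampling law $\mathrm{Prob}(i(k)=i)=\|\tilde{a}_i\|^2/\|\widetilde A\|_F^2$. The $\|\tilde{a}_i\|^2$ factors cancel, and using $\sum_i(\tilde{a}_i^\top e_k)^2=\|\widetilde A e_k\|^2$ and $\sum_i(\widetilde A x_*-\widetilde b)_i^2=\|\widetilde A x_*-\widetilde b\|^2$,
\[
\mathbb{E}\!\left[\|e_{k+1}\|^2\mid e_k\right]=\|e_k\|^2-\frac{\|\widetilde A e_k\|^2}{\|\widetilde A\|_F^2}+\frac{\|\widetilde A x_*-\widetilde b\|^2}{\|\widetilde A\|_F^2}.
\]
Now I invoke the one structural fact that matters: because $e_k\in\mathrm{range}(\widetilde A^\top)$, we have $\|\widetilde A e_k\|^2\ge\widetilde\sigma_{\rm min}^2\|e_k\|^2$. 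This converts the identity into the contraction $\mathbb{E}[\|e_{k+1}\|^2\mid e_k]\le(1-1/\widetilde R)\|e_k\|^2+\|\widetilde A x_*-\widetilde b\|^2/\|\widetilde A\|_F^2$. Taking total expectations, unrolling the affine recursion, and bounding $\sum_{j=0}^{k-1}(1-1/\widetilde R)^j\le\widetilde R=\|\widetilde A\|_F^2/\widetilde\sigma_{\rm min}^2$ turns the per-step term $\|\widetilde A x_*-\widetilde b\|^2/\|\widetilde A\|_F^2$ into the stated horizon $\|\widetilde A x_*-\widetilde b\|^2/\widetilde\sigma_{\rm min}^2$; combined with $\|e_0\|^2=\|x_0^r-x_*^r\|^2$ this is exactly \eqref{rate1}.

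The step I expect to be the crux is the application of $\|\widetilde A e_k\|^2\ge\widetilde\sigma_{\rm min}^2\|e_k\|^2$: it relies entirely on the null-space-invariance reduction guaranteeing $e_k\in\mathrm{range}(\widetilde A^\top)$, which is what lets the smallest \emph{nonzero} singular value $\widetilde\sigma_{\rm min}$ (rather than $0$) govern the contraction. This is precisely what makes an arbitrary initialization harmless and replaces the prior hypothesis $x_0-x_{\rm LS}\in\mathrm{range}(\widetilde A^\top)$, while carrying a general $x_*$ (instead of an LS solution with a structured residual) is what produces the general horizon $\|\widetilde A x_*-\widetilde b\|$. Specializing $x_*=x_{\rm LS}=A^\dagger b$ for a consistent underlying system, where $\widetilde A x_{\rm LS}-\widetilde b=Ex_{\rm LS}-\epsilon$, recovers Theorem~\ref{thm:DoublyNoisy_x0}.
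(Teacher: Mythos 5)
Your proof is correct and is essentially the same argument the paper relies on: the paper proves Theorem~\ref{thm:DoublyNoisy_x} by invoking the proof of \cite[Thm.~3.1]{doubly_noisy_RK} with $x_{\rm LS}$ replaced by an arbitrary $x_*$ and with $(A,b):=(\widetilde{A},\widetilde{A}x_*)$, explicitly leaving the details to the reader, and those details are exactly the steps you carry out --- the frozen null-space component $x_k^n=x_0^n$, the deterministic Pythagorean identity for $e_k=x_k^r-x_*^r$, the conditional expectation under row-norm sampling, the bound $\|\widetilde{A}e_k\|\ge\widetilde{\sigma}_{\rm min}\|e_k\|$ valid because $e_k\in{\rm range}(\widetilde{A}^\top)$, and the geometric-series bound yielding the horizon $\|\widetilde{A}x_*-\widetilde{b}\|^2/\widetilde{\sigma}_{\rm min}^2$. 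In short, you have written a self-contained version of the argument the paper cites, with no gaps.
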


\begin{proof}
The proof 
    can be obtained by \myNum{i} observing that the same argument as given in the proof of \cite[Thm. 3.1]{doubly_noisy_RK} can be used to show that $x_{\rm LS}$ there can be replaced by an arbitrary reference point $x_*$, and \myNum{ii} taking $A:=\widetilde{A}$ and $b:= \widetilde{A}x_* = Ax_*$ in applying
    \cite[Thm. 3.1]{doubly_noisy_RK} as we modified it in \myNum{i}. We leave the details to the reader.
\end{proof}

\section{Analysis of the singular vectors effect} \label{sec:singular_vectors}

{It was recently shown that when} RK {is} applied to consistent linear systems with full-rank matrices, the sequence $\{x_k\}$ does not converge to $x_{\rm LS}$ randomly ``from all possible directions"  but rather, it follows a specific pattern~\cite{steinerberger2021randomized}. The iterates of RK start by approximating $x_{\rm LS}$ from directions described by the right singular vectors corresponding to the largest singular values. For $k$ large, $x_k-x_{\rm LS}$ will mainly become a combination of right singular vectors corresponding to small singular values, and convergence to $x_{\rm LS}$ becomes slower. 
Particularly, the exponential decrease of the approximation error $\|x_k-x_{\rm LS}\|$ happens at different rates in different subspaces.  When the iterate $x_k$ is not mainly the linear span of right singular vectors corresponding to small singular values, RK enjoys a faster convergence rate. More precisely, Steinerberger derived the following result.

\begin{theorem} [\cite{steinerberger2021randomized}, Theorem 1] \label{Steinerberger}
Let $\{x_k\}$ be the iterates of RK applied to a consistent system as in \eqref{prb:consistent}. Assume that $A$ is of full column rank. Let $v_j$ be the $j$th right singular vector of $A$ associated with the singular value $\sigma_j$. Then {for any initialization $x_0 \in \mathbb{R}^n$}:
\begin{equation}
     \mathbb{E} \langle x_{k} - x_{\rm LS},v_j \rangle = \left(1-\frac{\sigma_j^2}{\|A\|_F^2}\right)^{k} 
     \langle  x_0-x_{\rm LS},v_j \rangle.
\end{equation}
for $j=1,...,n$.
\end{theorem}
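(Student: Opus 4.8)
The plan is to track the error vector $e_k := x_k - x_{\rm LS}$ and use consistency to linearize the iteration. Since $Ax=b$ is consistent, $b_{i(k)} = a_{i(k)}^\top x_{\rm LS}$, so the residual in \eqref{eq:ka} satisfies $a_{i(k)}^\top x_k - b_{i(k)} = a_{i(k)}^\top e_k$. Substituting, the RK update becomes the homogeneous recursion $e_{k+1} = (I - P_{i(k)})\,e_k$, where $P_i := a_i a_i^\top/\|a_i\|^2$ is the orthogonal projector onto $\mathrm{span}(a_i)$. This elimination of the inhomogeneous term is the key first move: it reduces the analysis to a product of random projectors acting on $e_0$.

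Next I would pass to expectations. Taking the conditional expectation over the random index $i(k)$ with $e_k$ fixed, the sampling weights $\|a_i\|^2/\|A\|_F^2$ cancel the normalization in $P_i$, so $\mathbb{E}[P_{i(k)}] = \frac{1}{\|A\|_F^2}\sum_i a_i a_i^\top = A^\top A/\|A\|_F^2$. Hence $\mathbb{E}[e_{k+1}\mid e_k] = M e_k$ with $M := I - A^\top A/\|A\|_F^2$, and by the tower property the means obey the deterministic linear recursion $\mathbb{E}[e_{k+1}] = M\,\mathbb{E}[e_k]$. This is precisely the ``linearized'' dynamics underlying Steinerberger's method: commuting expectation with the update before forming any norm keeps the recursion exactly linear, whereas a second-moment analysis would pick up cross terms.

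The last step diagonalizes along singular directions. Because $v_j$ is a right singular vector, $A^\top A\,v_j = \sigma_j^2 v_j$, so $v_j$ is an eigenvector of the symmetric matrix $M$ with eigenvalue $1 - \sigma_j^2/\|A\|_F^2$. Using self-adjointness of $M$ to move it onto $v_j$ inside the inner product gives
\[
\langle \mathbb{E}[e_{k+1}], v_j\rangle = \langle \mathbb{E}[e_k], M v_j\rangle = \left(1 - \frac{\sigma_j^2}{\|A\|_F^2}\right)\langle \mathbb{E}[e_k], v_j\rangle,
\]
and a one-line induction on $k$ with base case $e_0 = x_0 - x_{\rm LS}$ produces the stated identity.

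I do not expect a genuine obstacle: the entire content is the recognition that one should commute expectation with the linear functional $\langle\,\cdot\,,v_j\rangle$ rather than bound a norm, which is exactly what makes the singular components decouple as clean scalar recursions. The only points needing care are the cancellation of $\|a_i\|^2$ in $\mathbb{E}[P_{i(k)}]$ and the symmetry step. Full column rank is invoked only to guarantee that $x_{\rm LS}$ is unique and every $\sigma_j>0$; the algebra itself goes through for any right singular vector, with the component along any null-space direction ($\sigma_j=0$) simply remaining frozen, consistent with RK acting only within $\mathrm{range}(A^\top)$.
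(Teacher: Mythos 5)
Your proof is correct and takes essentially the same approach as the paper: the paper (which quotes Theorem~\ref{Steinerberger} from Steinerberger and instead proves the generalization in Theorem~\ref{thm:singular value additive noisy case}) uses the very same linearization---take the inner product with the right singular vector, compute the conditional expectation so the sampling weights cancel the row norms, and iterate the resulting scalar recursion---with your homogeneous recursion recovered when the residual term vanishes in the consistent case. Your packaging via the expected projector $M = I - A^\top A/\|A\|_F^2$ and its eigenvectors is simply the matrix form of the paper's row-by-row computation, not a genuinely different route.
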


In what follows, we extend the results of Steinerberger~\cite{steinerberger2021randomized} and analyze the smallest singular vector effect when there is noise in the matrix $A$ and vector $b$. We do not impose any assumption on the initial point $x_0$ of the algorithm or on the rank of the matrix. {Our error analysis reveals a dependence on the left singular vectors of $\widetilde{A}$:}

\begin{theorem}\label{thm:singular value additive noisy case}
Let $\{x_k\}$ be the iterates obtained in (\ref{eq:rknoisy}) when the RK algorithm is applied to the doubly-noisy linear system~\eqref{prb:inconsistent}. Let $x_0\in \R^n$ be arbitrary. Let $\widetilde u_j$ and $\widetilde v_j$ be the $j$th left and right singular vectors of $\widetilde A$ associated with singular value $\widetilde{\sigma}_j$. Then, for any $x_* \in \R^n$, we have
\begin{align}   \label{eq:iterate-singular-vector}
 \mathbb{E} \langle x_{k} - x_0^n-x_*^r,\widetilde v_j \rangle = & \left(1-\frac{\widetilde{\sigma}_j^2}{\|\widetilde{A}\|_F^2}\right)^{k}  \langle  x_0^r-x_*^r,\widetilde v_j \rangle  \\
 & -   \left[1- \left(1-\frac{\widetilde{\sigma}_j^2}{\|\widetilde A\|_F^2}\right)^k\right] 
 \frac{\langle \widetilde{A} x_* - \widetilde{b}, \widetilde u_j \rangle} 
{\widetilde{\sigma}_{j}}. \nonumber
\end{align} 
\end{theorem}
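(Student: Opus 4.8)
The plan is to adapt the Steinerberger linearization behind Theorem~\ref{Steinerberger} to the noisy, arbitrary-reference-point setting by tracking, for each fixed index $j$, the scalar sequence $\mathbb{E}\langle x_k, \widetilde{v}_j\rangle$. The starting point is the one-step expected update. Conditioning on $x_k$ and taking expectation over the random row index $i(k)$, drawn with probability $\|\widetilde{a}_i\|^2/\|\widetilde{A}\|_F^2$, the per-row norm in the denominator cancels the sampling weight, so that $\sum_i \widetilde{a}_i(\widetilde{a}_i^\top x_k - \widetilde{b}_i) = \widetilde{A}^\top(\widetilde{A}x_k - \widetilde{b})$ and the expected map becomes the affine operator
\[
\mathbb{E}[\,x_{k+1}\mid x_k\,] = x_k - \frac{1}{\|\widetilde{A}\|_F^2}\,\widetilde{A}^\top(\widetilde{A}x_k - \widetilde{b}).
\]
The key structural observation is that this operator is diagonalized by the right singular vectors of $\widetilde{A}$, which is exactly what makes the analysis along a single singular direction self-contained.

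Next I would take the inner product with $\widetilde{v}_j$ and use the SVD identities $\widetilde{A}\widetilde{v}_j = \widetilde{\sigma}_j\widetilde{u}_j$ and $\widetilde{A}^\top\widetilde{u}_j = \widetilde{\sigma}_j\widetilde{v}_j$. Since $\langle \widetilde{A}^\top(\widetilde{A}x_k-\widetilde{b}),\widetilde{v}_j\rangle = \widetilde{\sigma}_j\langle \widetilde{A}x_k-\widetilde{b},\widetilde{u}_j\rangle = \widetilde{\sigma}_j^2\langle x_k,\widetilde{v}_j\rangle - \widetilde{\sigma}_j\langle \widetilde{b},\widetilde{u}_j\rangle$, applying the tower property yields the decoupled affine recurrence
\[
\mathbb{E}\langle x_{k+1},\widetilde{v}_j\rangle = \left(1 - \frac{\widetilde{\sigma}_j^2}{\|\widetilde{A}\|_F^2}\right)\mathbb{E}\langle x_k,\widetilde{v}_j\rangle + \frac{\widetilde{\sigma}_j}{\|\widetilde{A}\|_F^2}\langle \widetilde{b},\widetilde{u}_j\rangle .
\]
This is a geometric recursion with contraction factor $1-\widetilde{\sigma}_j^2/\|\widetilde{A}\|_F^2$ and fixed point $\langle \widetilde{b},\widetilde{u}_j\rangle/\widetilde{\sigma}_j$, so by induction
\[
\mathbb{E}\langle x_k,\widetilde{v}_j\rangle - \frac{\langle \widetilde{b},\widetilde{u}_j\rangle}{\widetilde{\sigma}_j} = \left(1 - \frac{\widetilde{\sigma}_j^2}{\|\widetilde{A}\|_F^2}\right)^{k}\left(\langle x_0,\widetilde{v}_j\rangle - \frac{\langle \widetilde{b},\widetilde{u}_j\rangle}{\widetilde{\sigma}_j}\right).
\]

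Finally I would convert this into the stated left-hand side $\mathbb{E}\langle x_k - x_0^n - x_*^r,\widetilde{v}_j\rangle$ using orthogonality: because $\widetilde{v}_j \in {\rm range}(\widetilde{A}^\top)$ for $\widetilde{\sigma}_j>0$, the null-space parts drop out, giving $\langle x_0^n,\widetilde{v}_j\rangle = 0$, $\langle x_0,\widetilde{v}_j\rangle = \langle x_0^r,\widetilde{v}_j\rangle$, and $\langle x_*^r,\widetilde{v}_j\rangle = \langle x_*,\widetilde{v}_j\rangle$. The one remaining manipulation is to rewrite the fixed point in terms of the residual $\widetilde{A}x_* - \widetilde{b}$: since $\langle \widetilde{A}x_*,\widetilde{u}_j\rangle = \widetilde{\sigma}_j\langle x_*,\widetilde{v}_j\rangle = \widetilde{\sigma}_j\langle x_*^r,\widetilde{v}_j\rangle$, subtracting $\langle x_*^r,\widetilde{v}_j\rangle$ and regrouping the terms proportional to $\langle \widetilde{b},\widetilde{u}_j\rangle/\widetilde{\sigma}_j$ produces precisely the factor $\bigl[1-(1-\widetilde{\sigma}_j^2/\|\widetilde{A}\|_F^2)^k\bigr]\langle \widetilde{A}x_* - \widetilde{b},\widetilde{u}_j\rangle/\widetilde{\sigma}_j$ appearing in \eqref{eq:iterate-singular-vector}. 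The main point to get right is not any single estimate but the exactness of the whole chain: every step is an equality, which works only because the probability weights make the expected one-step operator diagonal in the $\{\widetilde{v}_j\}$ basis and hence decouple the singular directions; the residual bookkeeping that turns the plain fixed point $\langle\widetilde{b},\widetilde{u}_j\rangle/\widetilde{\sigma}_j$ into the residual-based form is where careful algebra is needed.
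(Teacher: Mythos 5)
Your proof is correct and follows essentially the same route as the paper: both take the conditional expectation of one RK step, use the row-norm sampling weights to cancel the denominators and obtain the affine map $x_k \mapsto x_k - \widetilde{A}^\top(\widetilde{A}x_k-\widetilde{b})/\|\widetilde{A}\|_F^2$, project onto $\widetilde{v}_j$ via the SVD identities, and iterate the resulting scalar affine recursion. The only difference is organizational: the paper shifts to $z_k = x_k - x_0^n - x_*^r$ \emph{before} taking expectations, so the residual term $\langle \widetilde{A}x_*-\widetilde{b},\widetilde{u}_j\rangle$ appears directly in the recursion, whereas you solve the unshifted recursion via its fixed point $\langle\widetilde{b},\widetilde{u}_j\rangle/\widetilde{\sigma}_j$ and recover the residual form by the change of variables at the end; the two are equivalent by the algebra you carry out in your final step.
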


\begin{proof} 
We can write 
\begin{align*}
x_{k+1}&=x_k-\frac{\widetilde{a}^\top_{i(k)}x_k-\widetilde{b}_{i(k)}}{\|\widetilde{a}_{i(k)}\|^2} \widetilde{a}_{i(k)} \\
&=x_k-\frac{\widetilde{a}^\top_{i(k)}(x_k-x_*)+\widetilde{a}^\top_{i(k)}x_*-\widetilde{b}_{i(k)}}{\|\widetilde{a}_{i(k)}\|^2} \widetilde{a}_{i(k)}.
\end{align*}
Note that $\widetilde{a}^\top_{i(k)}(x_k-x_*)=\widetilde{a}^\top_{i(k)}(x_k-x_*^r)$ and $\widetilde{a}_{i(k)}^\top x_0^n=0$, we can write
\begin{align}\label{1star}
    x_{k+1}\!-\!x_0^n\!-\!x_*^r \!=\!
    x_{k}\!-\!x_0^n\!&-\!x_*^r  
    \!-\!\frac{\displaystyle \widetilde{a}^\top_{i(k)}(x_k-x_0^n-x_*^r)}{\displaystyle \|\widetilde{a}_{i(k)}\|^2} \widetilde{a}_{i(k)}-\frac{\displaystyle \widetilde{a}_{i(k)}^\top x_*-\widetilde{b}_{i(k)}}{\displaystyle \|\widetilde{a}_{i(k)}\|^2} \widetilde{a}_{i(k)}. 
\end{align}
To simplify notation, denote {$z_{k}=x_{k}-x_0^n-x_*^r$ for all $k$.} Taking inner product with the $j$th right singular vector $\widetilde v_j$ on both sides of \eqref{1star} {we have}
\begin{align*}
      \langle z_{k+1},\widetilde v_j\rangle
    =
    \langle z_{k},\widetilde v_j\rangle
    -\frac{\widetilde{a}^\top_{i(k)}z_k}{\|\widetilde{a}_{i(k)}\|^2} \langle \widetilde{a}_{i(k)},\widetilde v_j\rangle 
 -\frac{\widetilde{a}_{i(k)}^\top x_*-\widetilde{b}_{i(k)}}{\|\widetilde{a}_{i(k)}\|^2} \langle \widetilde{a}_{i(k)},\widetilde v_j\rangle.
\end{align*}
Next, we take {the expectation, conditioned on $x_k$}, to get
\begin{align}\label{2star}
  {\mathbb E}_k(\langle z_{k+1},\widetilde v_j\rangle)
    =&
    \langle z_{k},\widetilde v_j\rangle
    -{\mathbb E}_k\left(\frac{\widetilde{a}^\top_{i(k)}z_k}{\|\widetilde{a}_{i(k)}\|^2} \langle \widetilde{a}_{i(k)},\widetilde v_j\rangle\right)\\ 
    &-{\mathbb E}_k\left(\frac{\widetilde{a}_{i(k)}^\top x_*-\widetilde{b}_{i(k)}}{\|\widetilde{a}_{i(k)}\|^2} \langle \widetilde{a}_{i(k)},\widetilde v_j\rangle \right). \nonumber
\end{align}
Note that, for calculating expectation in the last two terms, {the probability of sampling row $i$ is proportional to the row norm of the given noisy matrix $\widetilde{A}$, i.e., $p_{i(k)}=\mfrac{\|\widetilde{a}_{i(k)}\|^2}{\sum_{i(k)=1}^m\|\widetilde{a}_{i(k)}\|^2}=\mfrac{\|\widetilde{a}_{i(k)}\|^2}{\|\widetilde{A}\|^2_F}$.} 
{The second term of \eqref{2star} simplifies to:}
\begin{align*}
  {\mathbb E}_k\left(\frac{\widetilde{a}^\top_{i(k)}z_k}{\|\widetilde{a}_{i(k)}\|^2} \langle \widetilde{a}_{i(k)},\widetilde v_j\rangle\right)
&=
\sum_{i=1}^m\frac{\|\widetilde{a}_i\|^2}{\|\widetilde{A}\|_F^2} \frac{\widetilde{a}^\top_{i}z_k}{\|\widetilde{a}_{i}\|^2}  \langle \widetilde{a}_i,\widetilde v_j\rangle  
=\frac{\sum_{i=1}^m \widetilde{a}_i^\top z_k \widetilde{a}_i^\top \widetilde v_j}{\|\widetilde{A}\|_F^2} \\
&=\frac{\langle \widetilde{A} z_k, \widetilde{A} \widetilde v_j\rangle}{\|\widetilde{A}\|_F^2} 
=\frac{\widetilde \sigma_j^2 \langle z_k,\widetilde v_j\rangle}{\|\widetilde{A}\|_F^2},
\end{align*}
and {the last term of \eqref{2star} simplifies to}
\begin{align*}
   {\mathbb E}_k\!\left(\!\frac{
\widetilde{a}_{i(k)}^\top x_*\!-\!\widetilde{b}_{i(k)}}{\|\widetilde{a}_{i(k)}\|^2} \langle \widetilde{a}_{i(k)}\!,\!\widetilde v_j\rangle\!\right)\!
&=\!
\sum_{i=1}^m\!\frac{\|\widetilde{a}_i\|^2}{\|\widetilde{A}\|_F^2} \frac{\widetilde{a}_{i}^\top x_*\!-\!\widetilde{b}_{i}}{\|\widetilde{a}_{i}\|^2} \langle \widetilde{a}_{i}\!,\!\widetilde v_j\!\rangle 
   =\frac{1}{\|\widetilde{A}\|^2_F} \sum_{i=1}^m 
(\widetilde{a}_{i}^\top x_*\!-\!\widetilde{b}_{i})
\langle \widetilde{a}_{i},\widetilde v_j\rangle \\
&=  \frac{1}{\|\widetilde{A}\|^2_F} \langle \widetilde{A}x_*-\widetilde{b},\widetilde{A} \widetilde v_j\rangle  = \frac{\widetilde \sigma_j \langle \widetilde{A}x_*-\widetilde{b},\widetilde u_j\rangle}{\|\widetilde{A}\|^2_F}. 
\end{align*}
Using these in \eqref{2star}, we obtain
\begin{equation*}
      {\mathbb E}_k(\langle z_{k+1},\widetilde v_j\rangle)
    =
    \langle z_{k},\widetilde v_j\rangle
    -\frac{\widetilde \sigma_j^2 \langle z_k,\widetilde v_j\rangle}{\|\widetilde{A}\|_F^2}
    - \frac{\widetilde \sigma_j \langle \widetilde{A} x_*-\widetilde{b},\widetilde u_j\rangle}{\|\widetilde{A}\|_F^2}.
\end{equation*}
Thus,
\begin{equation*}
    {\mathbb E}(\langle z_{k+1},\widetilde v_j\rangle )
    =\left(1
    -\frac{\widetilde \sigma_j^2 }{\|\widetilde{A}\|_F^2}\right)
    {\mathbb E}(
    \langle z_{k},\widetilde v_j\rangle )
    - \frac{\widetilde \sigma_j \langle \widetilde{A} x_*-\widetilde{b},\widetilde u_j\rangle}{\|\widetilde{A}\|_F^2}.
\end{equation*}
Finally, iterating the above will allow us to establish \eqref{eq:iterate-singular-vector}.
\end{proof}

From the theorem above, we see that, in the noisy case, both right and left singular vectors play some role: the {residual} term, $\widetilde{A}x_*-\widetilde{b}$, contributes to the horizon in the directions of the left singular vectors. Also, for $k$ large enough, the dominant direction is given by the right singular vector corresponding to the smallest singular value.

\begin{remark}
When the system \eqref{prb:inconsistent} is consistent {($E = 0$ and $\epsilon=0$)} and $x_0 \in {\rm range}(\widetilde{A}^\top)$, with the selection of $x_*=x_{\rm LS}$, the least squares solution to the system, Theorem \ref{thm:singular value additive noisy case} yields the following extension of Theorem~\ref{Steinerberger}, {allowing for low-rank matrices.}  
\end{remark}

\begin{corollary}\label{Steinerberger-extended} 
Let $\{x_k\}$ be the iterates of RK applied to a consistent system as in \eqref{prb:consistent}, where $\text{rank}(A) 
\leq n$. Assume that $x_0\in {\rm range}(A^\top)$. Let $x_{\rm LS}=A^\dagger b$ and let $v_j$ be the $j$th right singular vector of $A$ associated to the singular value $\sigma_j$. Then:
\begin{equation}
     \mathbb{E} \langle x_{k} - x_{\rm LS},v_j \rangle = \left(1-\frac{\sigma_j^2}{\|A\|_F^2}\right)^{k} \langle  x_0-x_{\rm LS},v_j \rangle,
\end{equation}
for $j = 1,..., \text{rank}(A)$.
\end{corollary}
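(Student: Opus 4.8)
The plan is to obtain this corollary directly from Theorem~\ref{thm:singular value additive noisy case} by specializing its parameters, so no new computation is really needed. First I would set $\widetilde A = A$ and $\widetilde b = b$, which is legitimate because the consistency hypothesis means $E = 0$ and $\epsilon = 0$; consequently $\widetilde\sigma_j = \sigma_j$, $\widetilde u_j = u_j$, and $\widetilde v_j = v_j$, and the projections $(\cdot)^r$, $(\cdot)^n$ are onto $\mathrm{range}(A^\top)$ and $\mathrm{null}(A)$. I would then take the free reference point in that theorem to be $x_* = x_{\rm LS} = A^\dagger b$.

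The crucial step is to observe that the horizon (second) term on the right-hand side of \eqref{eq:iterate-singular-vector} vanishes identically. Since $Ax = b$ is consistent, $b \in \mathrm{range}(A)$, so $A A^\dagger b = b$; hence the residual $\widetilde A x_* - \widetilde b = A x_{\rm LS} - b = 0$, and the entire bracketed term drops out, leaving only the geometrically decaying first term. It then remains to simplify the projections. The assumption $x_0 \in \mathrm{range}(A^\top)$ gives $x_0^n = 0$ and $x_0^r = x_0$, while $x_{\rm LS} = A^\dagger b$ always lies in $\mathrm{range}(A^\dagger) = \mathrm{range}(A^\top)$, so $x_{\rm LS}^r = x_{\rm LS}$. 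Substituting these, the left-hand side $x_k - x_0^n - x_*^r$ becomes $x_k - x_{\rm LS}$ and the inner product $\langle x_0^r - x_*^r, v_j\rangle$ becomes $\langle x_0 - x_{\rm LS}, v_j\rangle$, which yields the claimed identity for each $j$.

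I expect essentially no obstacle, as this is a specialization rather than a fresh argument; the only points that warrant explicit verification are that consistency forces the residual to zero through $A A^\dagger b = b$, and that both $x_0$ and $x_{\rm LS}$ lie in the row space so the projections act trivially. The restriction $j \le \mathrm{rank}(A)$ serves only to guarantee $\sigma_j > 0$ and that $v_j$ is a genuine right singular vector spanning $\mathrm{range}(A^\top)$; for larger indices the statement would hold trivially, since $v_j \in \mathrm{null}(A)$ would make both sides vanish.
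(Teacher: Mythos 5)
Your proposal is correct and takes essentially the same route as the paper: there, the corollary is presented as an immediate specialization of Theorem~\ref{thm:singular value additive noisy case} (per the preceding remark) with $\widetilde A = A$, $\widetilde b = b$, and $x_* = x_{\rm LS}$, where consistency annihilates the residual term and the hypotheses force the projections to act trivially. The details you spell out---$Ax_{\rm LS}-b=0$ via $AA^\dagger b = b$, and $x_0, x_{\rm LS} \in {\rm range}(A^\top)$ so that $x_0^n=0$, $x_0^r=x_0$, $x_{\rm LS}^r=x_{\rm LS}$---are exactly what the paper leaves implicit.
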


We now show that Theorem~\ref{thm:singular value additive noisy case} indeed yields an estimate on the rate of convergence of the mean of RK iterates themselves (to a ball centered at $x_0^n + x_*^r$).

\begin{corollary}
\label{cor:better estimate}
Let $\{x_k\}$ be the iterates of RK applied to the doubly-noisy linear system~\eqref{prb:inconsistent}. 
Let $x_0$ be arbitrary. We have, for any $x_*\in \R^n$,
\begin{equation}\label{rate2}
 \| \mathbb{E}(x_{k}) -x_0^n- x_*^r\| \leq \left(1-\frac{1}{\widetilde{R}}\right)^{k} \| x_0^r-x_*^r \|+ \frac{\| \widetilde{A} x_* - \widetilde{b} \|}{\widetilde{\sigma}_{\rm min}}. 
\end{equation}  
where $\widetilde{R} = \|\widetilde A\|^2_F/\widetilde{\sigma}_{\rm min}^2$.
\end{corollary}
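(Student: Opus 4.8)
The plan is to upgrade the coordinatewise identity of Theorem~\ref{thm:singular value additive noisy case} into a norm bound on $\mathbb{E}(x_k) - x_0^n - x_*^r$ by means of Parseval's identity and the triangle inequality. Writing $z_k = x_k - x_0^n - x_*^r$ as in the proof of that theorem, the first step I would take is to observe that $z_k$ lies in ${\rm range}(\widetilde{A}^\top)$ for \emph{every} $k$, and deterministically so: each RK update in \eqref{eq:rknoisy} adds only a multiple of a row $\widetilde{a}_{i(k)}$ of $\widetilde{A}$, so the null-space component of the iterate is never altered, i.e. $x_k^n = x_0^n$, whence $z_k = x_k^r - x_*^r \in {\rm range}(\widetilde{A}^\top)$. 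By linearity, $\mathbb{E}(x_k) - x_0^n - x_*^r = \mathbb{E}(z_k)$ then also lies in ${\rm range}(\widetilde{A}^\top)$ and can be expanded in the orthonormal basis $\{\widetilde{v}_j\}_{j=1}^r$ of right singular vectors with nonzero singular values, so that by Parseval
\[
\|\mathbb{E}(z_k)\|^2 = \sum_{j=1}^r \big(\mathbb{E}\langle z_k, \widetilde{v}_j\rangle\big)^2 .
\]

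Next I would substitute the exact formula \eqref{eq:iterate-singular-vector} for each coefficient, writing it as $\alpha_j \langle x_0^r - x_*^r, \widetilde{v}_j\rangle - (1-\alpha_j)\,\langle \widetilde{A}x_* - \widetilde{b}, \widetilde{u}_j\rangle/\widetilde{\sigma}_j$ with $\alpha_j = (1 - \widetilde{\sigma}_j^2/\|\widetilde{A}\|_F^2)^k$. Viewing $\mathbb{E}(z_k) = S_1 - S_2$ as a difference of two vectors in ${\rm range}(\widetilde{A}^\top)$, where $S_1$ gathers the first terms and $S_2$ the second, the triangle inequality gives $\|\mathbb{E}(z_k)\| \le \|S_1\| + \|S_2\|$, and I would estimate the two pieces separately. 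For $S_1$, note that each base $1 - \widetilde{\sigma}_j^2/\|\widetilde{A}\|_F^2$ lies in $[0,1]$ and is largest when $\widetilde{\sigma}_j$ is smallest, so $\alpha_j \le (1 - 1/\widetilde{R})^k$ for all $j = 1,\dots,r$; combined with $\|x_0^r - x_*^r\|^2 = \sum_j \langle x_0^r - x_*^r, \widetilde{v}_j\rangle^2$ (valid since $x_0^r - x_*^r \in {\rm range}(\widetilde{A}^\top)$), this yields $\|S_1\| \le (1 - 1/\widetilde{R})^k \|x_0^r - x_*^r\|$. For $S_2$, I would use $0 \le 1 - \alpha_j \le 1$ and $1/\widetilde{\sigma}_j \le 1/\widetilde{\sigma}_{\rm min}$, together with Bessel's inequality for the orthonormal (but in general incomplete) left-singular system,
\[
\sum_{j=1}^r \langle \widetilde{A}x_* - \widetilde{b}, \widetilde{u}_j\rangle^2 \le \|\widetilde{A}x_* - \widetilde{b}\|^2 ,
\]
to conclude $\|S_2\| \le \|\widetilde{A}x_* - \widetilde{b}\|/\widetilde{\sigma}_{\rm min}$. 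Adding the two bounds produces exactly \eqref{rate2}.

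The main obstacle — the one genuinely new ingredient beyond routine bookkeeping — is justifying that $\mathbb{E}(z_k)$ has no component along ${\rm null}(\widetilde{A})$, so that the Parseval sum runs only over the $r$ nonzero singular triples. This invariance of the null-space component of the RK iterates is what makes the restriction to $j \le r$ legitimate, and on the $S_2$ side it is precisely what forces Bessel's inequality (rather than an equality) to control the residual term $\widetilde{A}x_* - \widetilde{b}$, whose component outside ${\rm span}\{\widetilde{u}_j\}_{j\le r}$ simply does not contribute. The remaining steps are standard consequences of $\alpha_j \in [0,1]$, the ordering $\widetilde{\sigma}_j \ge \widetilde{\sigma}_{\rm min}$, and the triangle inequality, and I would leave those elementary estimates brief.
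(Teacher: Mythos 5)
Your proof is correct, and it rests on the same two pillars as the paper's own proof: the coordinatewise identity \eqref{eq:iterate-singular-vector} of Theorem~\ref{thm:singular value additive noisy case}, and the fact that $\mathbb{E}(x_k)-x_0^n-x_*^r$ lies in ${\rm range}(\widetilde{A}^\top)$. Where you differ is in how the coordinate identities are assembled into a norm bound. The paper tests $\mathbb{E}(z_k)$ against the single unit vector $\sum_j c_j\widetilde{v}_j$ aligned with it, sums the identities with weights $c_j$, pulls the extreme factors $(1-\widetilde{\sigma}_\rho^2/\|\widetilde{A}\|_F^2)^k$ and $1/\widetilde{\sigma}_\rho$ out of the sums, and finishes with Cauchy--Schwarz; you instead expand $\mathbb{E}(z_k)=S_1-S_2$ in the $\{\widetilde{v}_j\}$ basis, apply the triangle inequality, and control $\|S_1\|$ by Parseval and $\|S_2\|$ by Bessel. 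Your route buys two small but real advantages. First, because you work with squared, hence nonnegative, coefficients, extracting the worst-case factors $\alpha_j\le(1-1/\widetilde{R})^k$ and $1-\alpha_j\le 1$ is immediate; the paper's corresponding step pulls maximal factors out of sums whose individual terms $\langle x_0^r-x_*^r,c_j\widetilde{v}_j\rangle$ can have either sign, which as written requires an extra absolute-value argument to be airtight. Second, you explicitly justify the row-space membership via the invariance $x_k^n=x_0^n$ of the null-space component under RK updates, a fact the paper's proof asserts without argument (``Note that $x_k-x_0^n-x_*^r\in{\rm range}(\widetilde{A}^\top)$''). Both proofs yield exactly the bound \eqref{rate2}, with your Bessel step correctly accounting for the residual component of $\widetilde{A}x_*-\widetilde{b}$ outside ${\rm span}\{\widetilde{u}_1,\dots,\widetilde{u}_r\}$.
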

\begin{proof}
Let $c \in \mathbb{R}^{\rho}$ be a unit norm vector. Multiplying both sides of \eqref{eq:iterate-singular-vector} by $c_j$ and summing up for $j=1$ to $j=\rho={\rm rank}(\widetilde{A})$ we get 
\begin{align}\label{eq:along singular vectors}
 \mathbb{E} \langle  x_{k} - x_0^n-x_*^r,\sum_{j=1}^\rho c_j\widetilde v_j \rangle &=
 \sum_{j=1}^\rho \left(1-\frac{\widetilde{\sigma}_j^2}{\|\widetilde{A}\|_F^2}\right)^{k} \langle  x_0^r-x_*^r,c_j\widetilde v_j \rangle \nonumber \\
 &- \sum_{j=1}^\rho
 \left[1-\left(1-\frac{\widetilde{\sigma}_j^2}{\|\widetilde A\|_F^2}\right)^k\right]
 \frac{\langle \widetilde{A} x_* - \widetilde{b}, c_j \widetilde u_j \rangle} 
{\widetilde{\sigma}_{j}}.
\end{align}  
Note that $x_k-x_0^n-x_*^r\in {\rm range}(\widetilde A^\top)={\rm span}(\{\widetilde v_1,\widetilde v_2,...,\widetilde v_\rho\})$, and, without loss of generality, we can assume that $x_k-x_0^n-x_*^r\not =0$. So, we can choose $c_j$'s such that $\sum_{j=1}^\rho |c_j|^2=1$ and
\begin{equation*}
    \frac{\mathbb{E}(x_k)-x_0^n-x_*^r}{\|\mathbb{E}(x_k)-x_0^n-x_*^r\|}=\sum_{j=1}^\rho c_j \widetilde v_j.
\end{equation*}
Thus, with this set of $\{c_j\}$, the left side of \eqref{eq:along singular vectors} equals to $\|\mathbb{E}(x_k)-x_0^n-x_*^r\|$, and the right side can be bounded from above by 
\begin{align*}
  &  \left(1-\frac{\widetilde{\sigma}_\rho^2}{\|\widetilde{A}\|_F^2}\right)^{k} |\langle  x_0^r-x_*^r,\sum_{j=1}^\rho c_j \widetilde v_j \rangle |  + \left[1-\left(1-\frac{\widetilde{\sigma}_1^2}{\|\widetilde A\|_F^2}\right)^k\right]
 \frac{|\langle \widetilde{A} x_* - \widetilde{b}, \sum_{j=1}^\rho c_j \widetilde u_j \rangle |}  
{\widetilde{\sigma}_{\rho}},
\end{align*}
which, by applying the Cauchy-Schwarz inequality to the two inner products,
can be bounded from above further by
\begin{equation*}
     \left( 1-\frac{\widetilde{\sigma}_\rho^2}{\|\widetilde{A}\|_F^2}\right)^{k} \|x_0^r-x_*^r\|
 + 
 \left[1-\left(1-\frac{\widetilde{\sigma}_1^2}{\|\widetilde A\|_F^2}\right)^k\right]
 \frac{\|\widetilde{A} x_* - \widetilde{b}\|}   
{\widetilde{\sigma}_{\rho}}.
\end{equation*}
This implies the inequality stated in the corollary.
\end{proof}

\begin{remark}\label{remark:after-error_of_the_mean}
It is interesting to compare the estimates of the error in Theorems \ref{thm:bergou} and \ref{thm:DoublyNoisy_x} with that in Corollary \ref{cor:better estimate}. The former measures the mean squared error, $\mathbb{E}\|x_k-x_{\rm LS}\|^2$ or $\mathbb{E}\| x_{k} -x_0^n- x_*^r\|^2$, while the latter bounds the error of the mean, $\| \mathbb{E}(x_{k}) -x_0^n- x_*^r\|$.
Using {Jensen's} inequality, we know that 
$\| \mathbb{E}(x_{k}) -x_0^n- x_*^r\|\leq \left(\mathbb{E}\| x_{k} -x_0^n- x_*^r\|^2\right)^{1/2}$. 
{When $\|x_0^r-x_*^r\|\leq 2 \mfrac{\|\widetilde{A} x_* - \widetilde{b}\|}   
{\widetilde{\sigma}_{min}}$, square root of the bound of Theorem \ref{thm:DoublyNoisy_x} is smaller than the bound of  Corollary \ref{cor:better estimate} while when $\|x_0^r-x_*^r\| \left(1-\left(1-\mfrac{\widetilde{\sigma}_{\rm min}^2}{\|\widetilde A\|_F^2}\right)^k \right) \geq 2 \mfrac{\|\widetilde{A} x_* - \widetilde{b}\|}   
{\widetilde{\sigma}_{min}}$, the bound of  Corollary \ref{cor:better estimate} is smaller than the square root of the bound of Theorem \ref{thm:DoublyNoisy_x}. For $k$ large, both bounds have the same convergence horizon.}
In Section~\ref{sec:numerics},  we will numerically compare these bounds.
\end{remark}

\section{Bounding the limit points of the RK iterates} \label{sec:bounding_balls}
From Theorems~\ref{thm:bergou} and  \ref{thm:DoublyNoisy_x} and Corollary~\ref{thm:singular value additive noisy case}, we can see that the limit points of the RK iterates, $\{x_k\}$, are contained in certain balls in $\R^n$. The smaller the balls, the sharper the estimates of the location of these limit points. Two families of balls can be deduced from these theorems: the one centered at $x_{\rm LS}$ under the assumption that $x_0-x_{\rm LS}\in {\rm range}(\widetilde{A}^\top)$ as given in Theorem~\ref{thm:bergou} and the one without using this assumption as given in Theorem~\ref{thm:DoublyNoisy_x} and Corollary~\ref{thm:singular value additive noisy case}. These two families of balls overlap but use different choices for their centers.
In this section, we search for the smallest balls in each of these two families. As it turns out, the two families of balls share the same smallest ball.

\subsection{With the assumption $x_0-x_{\rm LS}\in {\rm range}(\widetilde{A}^\top)$} We start with Theorem \ref{thm:bergou}, which asserts that, under the assumption that $x_0-x_{\rm LS}\in {\rm range}(\widetilde{A}^\top)$, the sequence of the iterates of RK applied to the doubly noisy linear system \eqref{prb:inconsistent}
approaches, in expectation, to a ball in $\R^n$ centered around the least squares solution $x_{\rm LS}$ of an associated underlying consistent system $Ax=b$ and with radius $r_{A,b}:=\mfrac{\| E x_{\rm LS} - \epsilon \|}{\sigma_{\rm min}(\widetilde{A})},$ where $E=\widetilde A - A$ and $\epsilon=\widetilde b-b.$ We denote this ball by $B(x_{\rm LS},r_{A,b})$. Let $x_0$ be given and consider the set ${\cal K}(x_0)$ of all possible pairs $(A,b)$ such that the system $Ax=b$ is consistent and $x_0-x_{\rm LS}\in {\rm range}(\widetilde A^\top)$. We can see that the sequence of RK iterates, {$\{x_k\}$}, approaches the intersection of the corresponding balls associated with the pairs $(A,b)\in {\cal K}(x_0)$. More precisely, if $L(x_0)$ denotes the collection of all the limit points of $\{x_k\}$ with $x_0$ as the initial term, then
\begin{equation}   L(x_0)\subseteq\bigcap_{(A,b)\in {\cal K}(x_0)} 
    B(x_{\rm LS},r_{A,b}).
\end{equation}
In the following result, we identify the balls with the smallest radius that attract the limit points of the sequence of RK iterates and determine all such balls.

\begin{theorem}\label{thm:smallest-ball}
Fix $\widetilde{A} \in \mathbb{R}^{m \times n}$ and $\widetilde{b} \in \mathbb{R}^m$. Let $(\widehat A,\widehat b) \in \mathbb{R}^{m \times n} \times \mathbb{R}^{m}$ be a minimizer of $\|Ex_{\rm LS}-\epsilon\|$ among all pairs $(A,b)$ where $E=\widetilde A - A$, $\epsilon=\widetilde b-b$, $x_{\rm LS}=A^\dagger b$, subject to the constraint that $Ax=b$ is consistent. That is, 
    \begin{equation}\label{constrained min}
        (\widehat A,\widehat b) \in \arg\min_{\substack {(A ,b )  \in \R^{m\times n} \times \R^{m}\\ Ax=b \\ \text{is consistent}}}  \|Ex_{\rm LS}-\epsilon\|.
    \end{equation}
    Denote $\widehat E=\widetilde A - \widehat A$, $\widehat \epsilon=\widetilde b-\widehat b$, and $\widehat x_{\rm LS}=\widehat A^\dagger \widehat b$. Then,
         \begin{equation}\label{(i)}
         \|\widehat E \widehat x_{\rm LS}- \widehat \epsilon\| = \|\widetilde A \widetilde x_{\rm LS}- \widetilde b\| ~{\it where}~ \widetilde x_{\rm LS}=\widetilde A^\dagger \widetilde b,\end{equation}
    and \begin{equation}\label{(ii)} \widehat x_{\rm LS} = \widetilde x_{\rm LS} +y ~{\it with}~y\in {\rm null}(\widetilde{A}).\end{equation} 
\end{theorem}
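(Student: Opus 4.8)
The plan is to collapse the constrained minimization over matrix--vector pairs \eqref{constrained min} into a single finite-dimensional least squares problem for the noisy system $\widetilde{A} x \approx \widetilde{b}$. The first step is an algebraic simplification of the objective that uses consistency crucially. For any consistent pair $(A,b)$ we have $b \in {\rm range}(A)$, so $A A^\dagger b = b$, i.e. $A x_{\rm LS} = b$ where $x_{\rm LS} = A^\dagger b$. Substituting $E = \widetilde{A} - A$ and $\epsilon = \widetilde{b} - b$ then gives
\begin{equation*}
E x_{\rm LS} - \epsilon = (\widetilde{A} - A) x_{\rm LS} - (\widetilde{b} - b) = \widetilde{A} x_{\rm LS} - A x_{\rm LS} - \widetilde{b} + b = \widetilde{A} x_{\rm LS} - \widetilde{b}.
\end{equation*}
Hence the objective $\|E x_{\rm LS} - \epsilon\|$ depends on $(A,b)$ only through the point $x_{\rm LS} = A^\dagger b$, and equals $\|\widetilde{A} x_{\rm LS} - \widetilde{b}\|$.

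Next I would show that, as $(A,b)$ ranges over all consistent pairs, the induced point $x_{\rm LS}$ sweeps out all of $\R^n$. The surjectivity is witnessed by an explicit construction: given any target $x_* \in \R^n$, choose any $A$ with $x_* \in {\rm range}(A^\top)$ and set $b := A x_*$. Then $A x = b$ is consistent (it has the solution $x_*$), and $A^\dagger b = A^\dagger A x_* = x_*$ since $A^\dagger A$ is the orthogonal projection onto ${\rm range}(A^\top)$ and $x_*$ already lies in that subspace. Combined with the simplification above, this shows that the constrained problem \eqref{constrained min} has the same optimal value, and the same set of attained optimal $x_{\rm LS}$-values, as the unconstrained least squares problem $\min_{x_* \in \R^n} \|\widetilde{A} x_* - \widetilde{b}\|$.

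The final step invokes the standard theory of this least squares problem. Its minimal value is $\|\widetilde{A}\widetilde{x}_{\rm LS} - \widetilde{b}\|$ with $\widetilde{x}_{\rm LS} = \widetilde{A}^\dagger \widetilde{b}$, and its full solution set is $\widetilde{x}_{\rm LS} + {\rm null}(\widetilde{A})$, obtained from the normal equations $\widetilde{A}^\top \widetilde{A} x = \widetilde{A}^\top \widetilde{b}$ together with ${\rm null}(\widetilde{A}^\top\widetilde{A}) = {\rm null}(\widetilde{A})$. Translating back to the minimizing pair $(\widehat{A}, \widehat{b})$: the associated $\widehat{x}_{\rm LS} = \widehat{A}^\dagger \widehat{b}$ must realize the minimum of $\|\widetilde{A} x_* - \widetilde{b}\|$, which yields \eqref{(i)}, namely $\|\widehat{E}\widehat{x}_{\rm LS} - \widehat{\epsilon}\| = \|\widetilde{A}x_* - \widetilde{b}\|$ evaluated at the optimum $= \|\widetilde{A}\widetilde{x}_{\rm LS} - \widetilde{b}\|$, and membership of $\widehat{x}_{\rm LS}$ in the minimizer set gives \eqref{(ii)}, $\widehat{x}_{\rm LS} = \widetilde{x}_{\rm LS} + y$ with $y \in {\rm null}(\widetilde{A})$.

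The main obstacle is really the reduction itself rather than any computation: one must be confident that the apparently much larger optimization over all consistent pairs $(A,b)$ genuinely collapses to an $x_*$-problem in $\R^n$. This rests on the two observations above working in tandem---the consistency identity $A x_{\rm LS} = b$ that eliminates $A$ and $b$ from the objective, and the surjectivity of the map $(A,b) \mapsto x_{\rm LS}$ onto $\R^n$. Note also that existence of a minimizer $(\widehat{A},\widehat{b})$ is already assumed in the statement, so I only need the characterization; the surjectivity argument supplies, for any optimal $x_*$, a concrete consistent pair achieving it, closing the loop between the two formulations.
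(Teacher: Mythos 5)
Your proposal is correct and takes essentially the same route as the paper's proof: both collapse the constrained minimization over consistent pairs $(A,b)$ to the unconstrained least-squares problem $\min_{x\in\R^n}\|\widetilde{A}x-\widetilde{b}\|$ using the identity $Ex_{\rm LS}-\epsilon=\widetilde{A}x_{\rm LS}-\widetilde{b}$ (valid since consistency gives $Ax_{\rm LS}=b$) together with the fact that $x_{\rm LS}=A^\dagger b$ sweeps out all of $\R^n$, and then read off \eqref{(i)} and \eqref{(ii)} from the standard description of the least-squares solution set $\widetilde{x}_{\rm LS}+{\rm null}(\widetilde{A})$. The only difference is presentational: the paper realizes the surjectivity through a compact-SVD parameterization $A=U\Sigma V^\top$, $b=U\Sigma V^\top x_b$ with $x_{\rm LS}=VV^\top x_b$, whereas you construct, for any target $x_*$, a consistent pair directly by choosing $A$ with $x_*\in{\rm range}(A^\top)$ and $b=Ax_*$ --- the same reduction, stated without the SVD machinery.
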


\begin{proof}
We first demonstrate that, indeed, the constrained minimization problem in $(A,b)$ does have a solution by deriving a solution to \eqref{constrained min}. We do this by reparameterizing the consistency constraint as follows.

Let $Ax=b$ be any consistent system then $\widetilde{A}=A+E$ and $\widetilde{b}=b+\epsilon$.
Consider the compact singular value decomposition $A=U\Sigma V^\top$ where $U \in \R^{m\times r}$ and $V \in \R^{n\times r}$ have orthonormal columns, and $\Sigma={\rm diag}(\sigma_1,...,\sigma_r)$ where $r={\rm rank}(A)$ and $\sigma_1\geq \sigma_2\geq \cdots \geq \sigma_r>0$.
Since $Ax=b$ is consistent, there exists an $x_b \in \R^n$ such that $b=Ax_b=U\Sigma V^\top x_b$.
Hence, the least squares solution of $Ax=b$ can be expressed as $x_{\rm LS}=A^\dagger b=V\Sigma^{-1}U^\top b=VV^\top x_b$ and we have
\begin{equation} \label{eq1}
    Ex_{\rm LS}-\epsilon=(\widetilde A -A)x_{\rm LS}-(\widetilde b -b)= \widetilde A x_{\rm LS} - \widetilde b = \widetilde A VV^\top x_b - \widetilde b.
\end{equation}
Therefore, the following two minimization problems are equivalent 
\begin{equation}
    \min_{\substack {(A ,b )  \in \R^{m\times n} \times \R^{m}\\ Ax=b \\ \text{is consistent}}}  \|Ex_{\rm LS} - \epsilon\| =  \min_{\substack {(V, x_b) \in \R^{n\times r} \times \R^{n} \\ V^\top V = I_r \\ r\leq n}}  \|\widetilde A VV^\top x_b- \widetilde b\|, \label{eq3} 
\end{equation}
and their solution sets are related through the transformations between the two feasible sets induced by the following mapping from $(V,x_b)$ to $(A,b)$:
\begin{equation}\label{mapping}
A=U\Sigma V^\top ~{\rm and}~b=U\Sigma V^\top x_b.
\end{equation}
Note that vectors expressed in the form $VV^\top x_b$ would exhaust all $\R^n$ with all possible choices of $(V \in \R^{n\times r},x_b \in \R^{n})$ such that  $V^\top V = I_r$ and $r\leq n$. Thus, we have
\begin{align}
 \min_{\substack {(V \in \R^{n\times r},x_b \in \R^{n}) \\ V^\top V = I_r \\ r\leq n}}  \|\widetilde A VV^\top x_b-\widetilde b\|
    = \min_{\substack {x \in \R^{n}}}  \|\widetilde A x -\widetilde b\| 
    = \|\widetilde A \widetilde x_{\rm LS} -\widetilde b\| \label{eq33}
\end{align}
\text{with} $\widetilde x_{\rm LS}=\widetilde A^\dagger \widetilde b$.

From equalities \eqref{eq33}, we see that we can take $(\widehat{\widehat{A}},\widehat{\widehat{b}})=(U\Sigma V^\top, \allowbreak U\Sigma V^\top x_b)$ with $V$ and $x_b$ satisfying $V^\top V=I_r$ and $VV^\top x_b=\widetilde{x}_{\rm LS}$, $U$ arbitrary satisfying $U^\top U=I_r$, and $\Sigma$ any diagonal matrix in $\R^{r\times r}$ with positive entries along the diagonal. Then $(\widehat{\widehat{A}},\widehat{\widehat{b}})$ would solve the minimization problem \eqref{constrained min}. There are many possible solutions, given the many choices of such $U$, $V$, $\Sigma$, and $x_b$. 

Next, we show that every solution to \eqref{constrained min} can be given in the form of $(\widehat{\widehat{A}},\widehat{\widehat{b}})$.
Let 
\begin{equation*}
    (\widehat A,\widehat b) \in \arg \min_{\substack {(A ,b )  \in \R^{m\times n} \times \R^{m}\\ Ax=b \\ \text{is consistent}}}  \|Ex_{\rm LS}-\epsilon\|, 
\end{equation*}
 with $\widehat A = U\Sigma \widehat V^\top$ (a compact SVD of $\widehat{A}$) and $\widehat b = \widehat A \widehat x_b$ where $\widehat V \in \R^{n\times r}$ has orthonormal columns. Then, from \eqref{eq1} and \eqref{eq3}, we see that 
 \begin{align*}
    (\widehat V, \widehat x_b ) \in \arg \min_{\substack {(V \in \R^{n\times r},x_b \in \R^{n}) \\ V^\top V \!=\! I_r \\ r\leq n}}  \|\widetilde A VV^\top x_b-\widetilde b\| 
  = \{(V,x_b)\!:\!VV^\top x_b\in\arg \min_{x\in \R^n}\|\widetilde{A}{x}-\widetilde{b}\|\},
 \end{align*}
 and, additionally, given \eqref{eq33},
  $ \|\widehat{E}\widehat{x}_{\rm LS}-\widehat{\epsilon}\| = \|\widetilde{A}\widetilde{x}_{\rm LS}-\widetilde{b}\|.$ 
 This verifies \eqref{(i)}.
 Furthermore, it follows that $\widehat V \widehat V^\top \widehat{x}_b = \widetilde x_{\rm LS}+y$ with $y\in {\rm null}(\widetilde{A})$. 
 So, when $(\widehat{A},\widehat{b})$ solves the constrained minimization problem, one must have
 $ \widehat{A}=U\Sigma \widehat{V}^\top~{\rm and}~\widehat{b}=U\Sigma \widehat{V}^\top \widehat{x}_b$
with $\widehat{x}_{\rm LS}=\widehat{V}\widehat{V}^\top \widehat{x}_b=\widetilde{x}_{\rm LS}+y$ for any $y\in {\rm null}(\widetilde{A})$. This is \eqref{(ii)}, completing the proof.
\end{proof}

With this theorem, we can immediately obtain the following description of the balls with the minimum radius.
\begin{corollary}\label{cor:center1}
The minimum radius among all balls $B(x_{\rm LS},r_{A,b})$ generated by all {tuples} ${(A,b)}\in {\cal K}{(x_0)}$  is given by 
\begin{align*}
 & \min\{ r_{A,b}~:~Ax =b ~{\it is~ consistent},~ {\it and}  ~x_0 - x_{\rm LS}\in {\rm range}(\widetilde{A}^\top) \} =\frac{\|\widetilde{A}\widetilde{x}_{\rm LS}-\widetilde{b}\|}{\widetilde \sigma_{\rm min}}.   
\end{align*}
Moreover, the centers of the balls with the minimum radius depend on where $x_0$ is located and can be given uniquely by
\begin{equation*}
    x_{\rm LS}=\widetilde{x}_{\rm LS}+x_0^n.
\end{equation*}
\end{corollary}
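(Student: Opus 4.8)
The plan is to derive this directly from Theorem~\ref{thm:smallest-ball}, which already supplies the unconstrained minimum value and a characterization of all minimizers, and then to fold in the extra constraint that defines ${\cal K}(x_0)$. First I would recall the setup: a pair $(A,b)$ lies in ${\cal K}(x_0)$ precisely when $Ax=b$ is consistent and, in addition, $x_0-x_{\rm LS}\in {\rm range}(\widetilde{A}^\top)$ with $x_{\rm LS}=A^\dagger b$. Since ${\cal K}(x_0)$ is a subset of the set of all consistent pairs, the minimum of $\|Ex_{\rm LS}-\epsilon\|$ over ${\cal K}(x_0)$ is at least the unconstrained minimum, which by \eqref{(i)} equals $\|\widetilde{A}\widetilde{x}_{\rm LS}-\widetilde{b}\|$. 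Dividing by $\widetilde{\sigma}_{\rm min}$, the claimed radius is therefore a lower bound for the constrained problem, and it remains to show it is attained within ${\cal K}(x_0)$.

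Next I would translate the range constraint into a condition on $x_{\rm LS}$. Because ${\rm null}(\widetilde{A})$ is the orthogonal complement of ${\rm range}(\widetilde{A}^\top)$, the requirement $x_0-x_{\rm LS}\in {\rm range}(\widetilde{A}^\top)$ is equivalent to $(x_0-x_{\rm LS})^n=0$, that is, $x_{\rm LS}^n=x_0^n$. On the other hand, \eqref{(ii)} tells us that every unconstrained minimizer satisfies $x_{\rm LS}=\widetilde{x}_{\rm LS}+y$ with $y\in {\rm null}(\widetilde{A})$; and since $\widetilde{x}_{\rm LS}=\widetilde{A}^\dagger\widetilde{b}\in {\rm range}(\widetilde{A}^\top)$ has zero null-space component, we have $x_{\rm LS}^n=y$. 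Combining these, the constraint forces exactly $y=x_0^n$.

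The key observation---and essentially the only real step---is that $y=x_0^n$ is a legitimate choice, since $x_0^n\in {\rm null}(\widetilde{A})$ by definition. Hence among the minimizers produced by Theorem~\ref{thm:smallest-ball} there is at least one satisfying the extra range constraint, so a minimizer lies in ${\cal K}(x_0)$. Consequently the constrained and unconstrained minima coincide and both equal $\|\widetilde{A}\widetilde{x}_{\rm LS}-\widetilde{b}\|$, which after division by $\widetilde{\sigma}_{\rm min}$ gives the stated minimum radius. For the second assertion, any minimizer in ${\cal K}(x_0)$ must simultaneously obey $x_{\rm LS}=\widetilde{x}_{\rm LS}+y$ and $y=x_0^n$, so its center is pinned down to $x_{\rm LS}=\widetilde{x}_{\rm LS}+x_0^n$; this value is independent of which particular $(A,b)$ realizes it, giving uniqueness of the center. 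I expect the only subtlety to be confirming that appending the range constraint does not raise the minimum, and this is resolved entirely by the freedom to select $y=x_0^n$ in the characterization \eqref{(ii)}.
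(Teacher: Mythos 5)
Your proposal is correct and follows exactly the route the paper intends: the corollary is stated as an immediate consequence of Theorem~\ref{thm:smallest-ball}, using \eqref{(i)} for the minimum value and \eqref{(ii)} together with the constraint $x_0-x_{\rm LS}\in{\rm range}(\widetilde{A}^\top)$ (equivalently $x_{\rm LS}^n=x_0^n$) to pin the center to $\widetilde{x}_{\rm LS}+x_0^n$. Your observation that the constrained minimum is attained by selecting $y=x_0^n\in{\rm null}(\widetilde{A})$ in the characterization of minimizers is precisely the step that makes the corollary ``immediate,'' so nothing further is needed.
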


\subsection{Without any assumption on $x_0$} Next, we use Theorem~\ref{thm:DoublyNoisy_x} and Corollary~\ref{cor:better estimate}, which removes the use of the underlying consistent systems and their least squares solution $x_{\rm LS}$. So, the center is $x_0^n+x_*^r$ instead of $x_{\rm LS}$ and the consistent system is taken to be $\widetilde{A}y=\widetilde{A}x_*$ for unknown vector $y$. Thus, we see that the limit points of $\{x_k\}$ are contained in the intersection of all the balls given by
\begin{equation*}
B\left(x_0^n+x_*^r,\frac{\|\widetilde{A}x_*-\widetilde{b}\|}{\widetilde{\sigma}_{min}}\right),
\end{equation*}  
where $x_0$ and $x_*$ can run through all vectors in $\R^n$.

Now, it is easy to see that the residual of the least squares solution must give the smallest radius. More precisely, we have the following.

\begin{figure}
    \centering
    \includegraphics[width=0.45\textwidth]{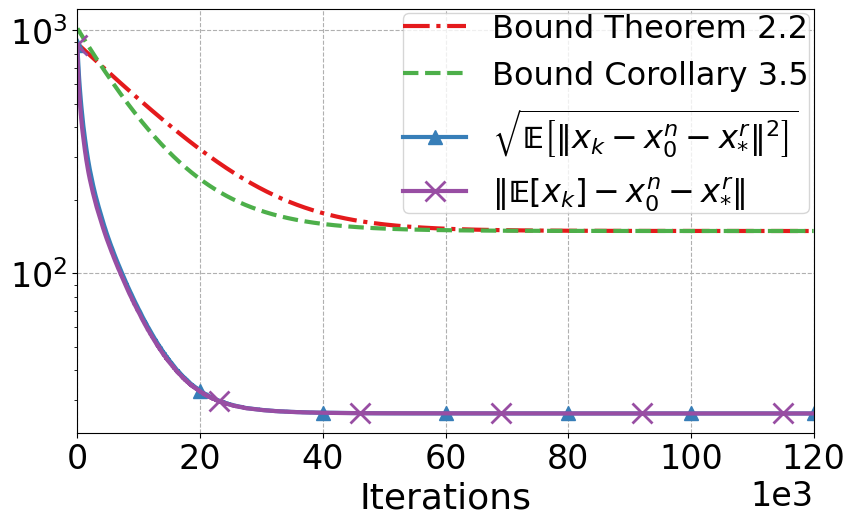}
    \includegraphics[width=0.45\textwidth]{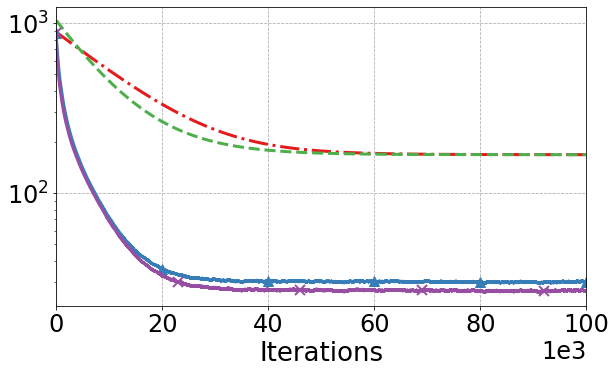}
 \caption{{The approximation errors  $\sqrt{\Exp{\|x_k-x_0^n-x_*^r\|^2}}$ and $\|\Exp{x_k}-x_0^n-x_*^r\|$} of RK applied to $\widetilde Ax \approx \widetilde b$, square root of bound of Theorem \ref{thm:DoublyNoisy_x}, and bound of Corollary \ref{cor:better estimate}. We have $m=1000$, $n=500$, and ${\rm rank}(\widetilde A)=300$. {The initialization }$x_0$ and {reference point} $x_*$ are random. We have $\|\widetilde b_{{\rm Col}(\widetilde A)^\perp}\|=\beta$. On the left, $\beta=10$. Here, on the right, $\beta=10000$.}
  \label{fig:comp_bounds3_noisy}
\end{figure}

\begin{corollary}\label{thm:smallest-ball2}
Among all the balls
\begin{equation*}
 B\left(x_0^n+x_*^r,\frac{\|\widetilde{A}x_*-\widetilde{b}\|}{\widetilde{\sigma}_{min}}\right),~x_*\in \R^n,   
\end{equation*}
the smallest radius is attained when $x_*=\widetilde{x}_{\rm LS}+y$, where $\widetilde{x}_{\rm LS}$ is the least squares solution to \eqref{prb:inconsistent} and $y\in {\rm null}(\widetilde{A})$. Moreover,  the center of the smallest ball is given by $x_0^n+\widetilde{x}_{\rm LS}$.   
\end{corollary}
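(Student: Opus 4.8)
The plan is to observe that the radius function $r(x_*) = \|\widetilde{A}x_* - \widetilde{b}\|/\widetilde{\sigma}_{\rm min}$ depends on $x_*$ only through its numerator, since $\widetilde{\sigma}_{\rm min}$, the smallest nonzero singular value of the fixed matrix $\widetilde{A}$, is a positive constant independent of $x_*$. Minimizing the radius over $x_* \in \R^n$ is therefore equivalent to minimizing the residual norm $\|\widetilde{A}x_* - \widetilde{b}\|$, which is nothing but the classical least squares problem associated with the system \eqref{prb:inconsistent}.

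First I would invoke the standard characterization of the solution set of the least squares problem: a vector $x_*$ minimizes $\|\widetilde{A}x_* - \widetilde{b}\|$ if and only if it satisfies the normal equations $\widetilde{A}^\top \widetilde{A} x_* = \widetilde{A}^\top \widetilde{b}$, and the complete solution set is precisely $\{\widetilde{x}_{\rm LS} + y : y \in {\rm null}(\widetilde{A})\}$, where $\widetilde{x}_{\rm LS} = \widetilde{A}^\dagger \widetilde{b}$ is the minimum-norm least squares solution. This immediately yields the first claim: the smallest radius is attained exactly at $x_* = \widetilde{x}_{\rm LS} + y$ with $y \in {\rm null}(\widetilde{A})$, and the corresponding minimal radius equals $\|\widetilde{A}\widetilde{x}_{\rm LS} - \widetilde{b}\|/\widetilde{\sigma}_{\rm min}$.

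For the statement about the center, I would compute the row-space projection $x_*^r$ for an arbitrary minimizer $x_* = \widetilde{x}_{\rm LS} + y$. Recalling the notation that $x_*^r$ denotes the orthogonal projection of $x_*$ onto ${\rm range}(\widetilde{A}^\top)$, and using that $\widetilde{x}_{\rm LS} = \widetilde{A}^\dagger \widetilde{b} \in {\rm range}(\widetilde{A}^\top)$ while $y \in {\rm null}(\widetilde{A}) = {\rm range}(\widetilde{A}^\top)^\perp$, linearity of the projection gives $x_*^r = \widetilde{x}_{\rm LS}^r + y^r = \widetilde{x}_{\rm LS} + 0 = \widetilde{x}_{\rm LS}$. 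Hence the center $x_0^n + x_*^r$ equals $x_0^n + \widetilde{x}_{\rm LS}$, independently of the choice of $y$, which establishes the second claim.

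There is no genuine obstacle here: the result is a direct consequence of elementary least squares theory together with the orthogonal decomposition $\R^n = {\rm range}(\widetilde{A}^\top) \oplus {\rm null}(\widetilde{A})$. The only point worth stating with care is that, although the minimizing $x_*$ is non-unique (it ranges over a coset of ${\rm null}(\widetilde{A})$), the resulting ball is unique, because both its radius and its center are invariant under adding elements of ${\rm null}(\widetilde{A})$ to $x_*$. For completeness I would also remark that this minimal radius and center coincide with those obtained in Corollary~\ref{cor:center1}, confirming the earlier assertion that the two families of balls share the same smallest ball.
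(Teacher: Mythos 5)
Your proof is correct and takes essentially the same approach as the paper: the paper's (very terse) proof likewise reduces the radius minimization to the least squares problem for $\widetilde{A}x \approx \widetilde{b}$, whose solution set is the coset $\widetilde{x}_{\rm LS} + {\rm null}(\widetilde{A})$, and obtains the center from the identity $\widetilde{x}_{\rm LS}^r = \widetilde{x}_{\rm LS}$, which is exactly your computation $x_*^r = \widetilde{x}_{\rm LS}^r + y^r = \widetilde{x}_{\rm LS}$. You merely spell out the standard least-squares facts and the invariance of the ball under $y \in {\rm null}(\widetilde{A})$ that the paper leaves implicit.
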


\begin{proof}
    The first part of the statement follows directly from minimizing the radius, and the second part follows from the fact that $\widetilde{x}_{\rm LS}^r=\widetilde{x}_{\rm LS}$. 
\end{proof}

\begin{figure}
    \centering
    \includegraphics[width=0.45\textwidth]{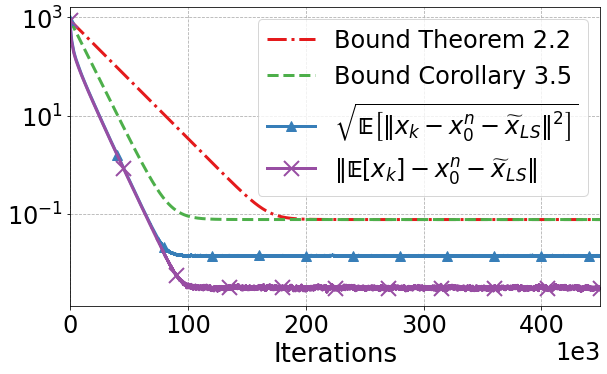}
    \includegraphics[width=0.45\textwidth]{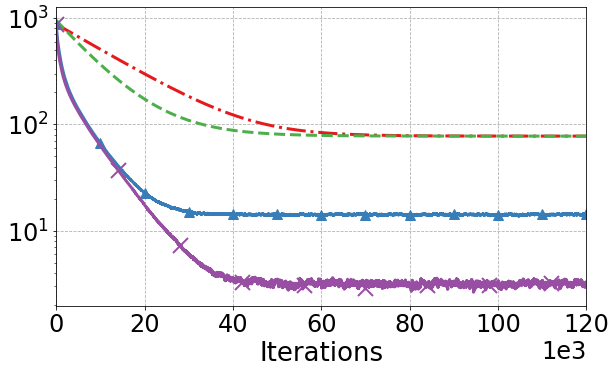}
 \caption{{The approximation errors  $\sqrt{\Exp{\|x_k-x_0^n-\widetilde x_{\rm LS}\|^2}}$ and $\|\Exp{x_k}-x_0^n- \widetilde x_{\rm LS}\|$} of RK applied to $\widetilde Ax \approx \widetilde b$, square root of bound of Theorem \ref{thm:DoublyNoisy_x}, and bound of Corollary \ref{cor:better estimate}. We have $m=1000$, $n=500$, and ${\rm rank}(\widetilde A)=300$. The initialization $x_0$ is random and $\widetilde x_{\rm LS}=\widetilde A^\dagger\widetilde b$. We have $\|\widetilde b_{{\rm Col}(\widetilde A)^\perp}\|=\beta$. On the left, $\beta=10$. On the right, $\beta=10000$.}
  \label{fig:comp_bounds3_noisy_xls}
\end{figure}
\begin{figure}
    \centering
    \includegraphics[width=0.45\textwidth]{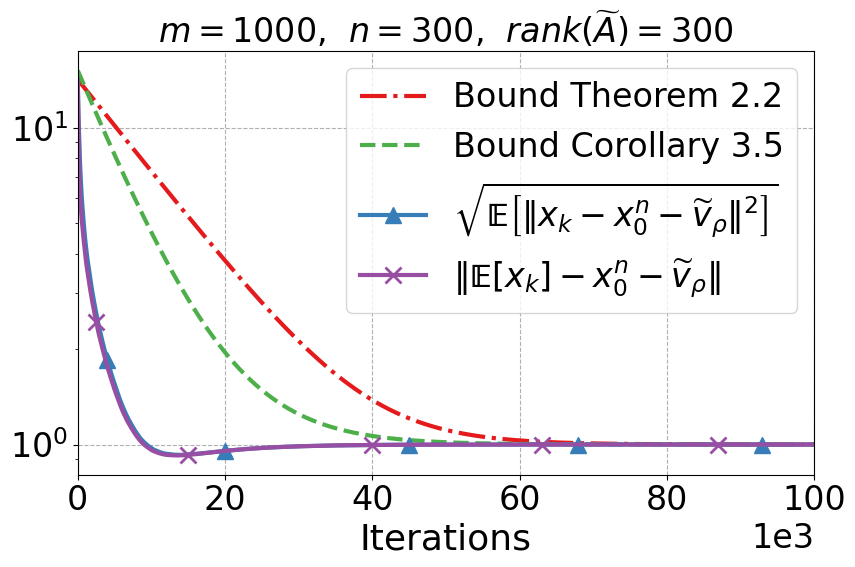}
    \includegraphics[width=0.45\textwidth]{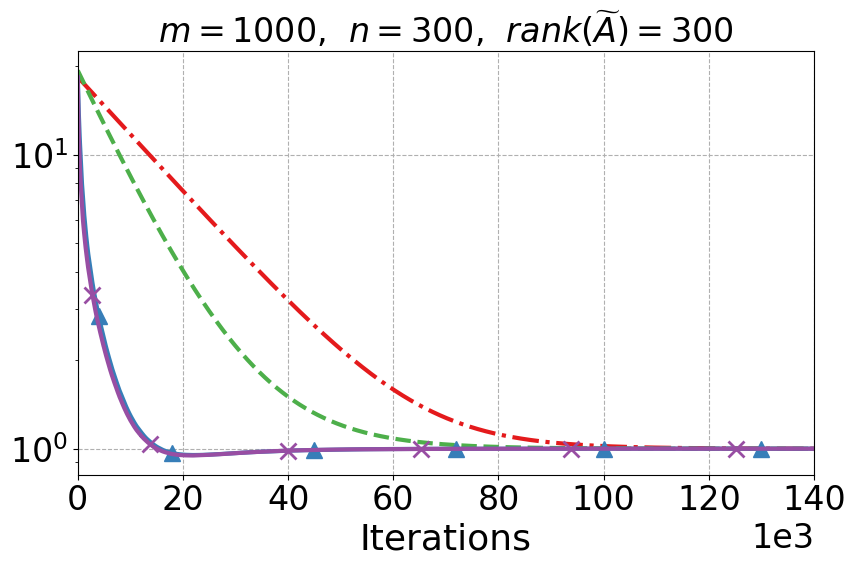}
 \caption{{The approximation errors  $\sqrt{\Exp{\|x_k-x_0^n-\widetilde{v}_{\rho}\|^2}}$ and $\|\Exp{x_k}-x_0^n- \widetilde{v}_{\rho}\|$} of RK applied to $\widetilde Ax=0$ $(\widetilde b=0)$ with ${\rm rank}(\widetilde A)=\rho$, square root of bound of Theorem \ref{thm:DoublyNoisy_x}, and bound of Corollary \ref{cor:better estimate}. The initialization $x_0$ is arbitrary. On the left, $\widetilde A$ is of low rank. On the right, $\widetilde A$ is full-rank. }
  \label{fig:comp_bounds4_smallestSingularVector}
\end{figure}

\section{Numerical results} \label{sec:numerics}

In this section, we present numerical results that support our theoretical findings. 
First, we compare our general bounds of Theorem \ref{thm:DoublyNoisy_x} and Corollary \ref{cor:better estimate}, which accommodate arbitrary starting points and general reference points. We compare these bounds on noisy inconsistent systems, 
generated using synthetic data, and on real-world data from the LIBSVM~\cite{LIBSVM} dataset. {In our theoretical results, Theorem \ref{thm:DoublyNoisy_x} bounds $\Exp{\|x_k-x_0^n-x_*^r\|^2}$ while Corollary \ref{cor:better estimate} bounds $\|\Exp{x_k}-x_0^n-x_*^r\|$. The former describes a ball that attracts the RK iterates in the mean-squared sense, while the latter describes a ball that attracts the mean of the iterates. In the numerical results, we compare the square root of the bound of Theorem \ref{thm:DoublyNoisy_x} and the bound of Corollary \ref{cor:better estimate} alongside $\sqrt{\Exp{\|x_k-x_0^n-x_*^r\|^2}}$ and $\|\Exp{x_k}-x_0^n-x_*^r\|$ where the expectation is computed by averaging over several runs of the algorithm.}
Second, we empirically validate the equality of Theorem \ref{thm:singular value additive noisy case} that describes the convergence along the singular vectors of the noisy matrix in doubly noisy linear systems.
Finally, we illustrate the limiting balls containing the final iterates of RK through simple examples in the $2$D case. We demonstrate that the ball centered at $x_0^n+\widetilde x_{\rm LS}$ achieves the smallest radius, confirming our theoretical analysis.
\begin{figure}
    \centering
    \includegraphics[width=0.44\textwidth]{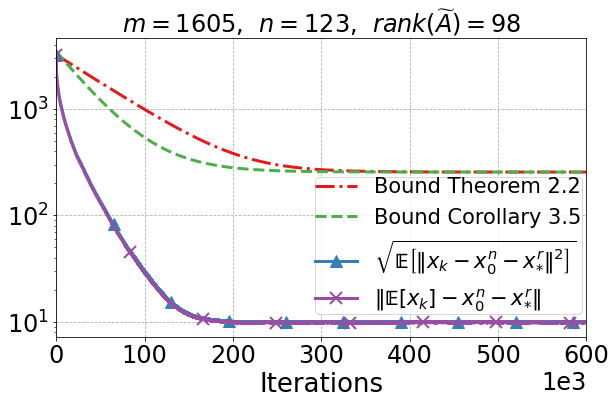}
    \includegraphics[width=0.44\textwidth]{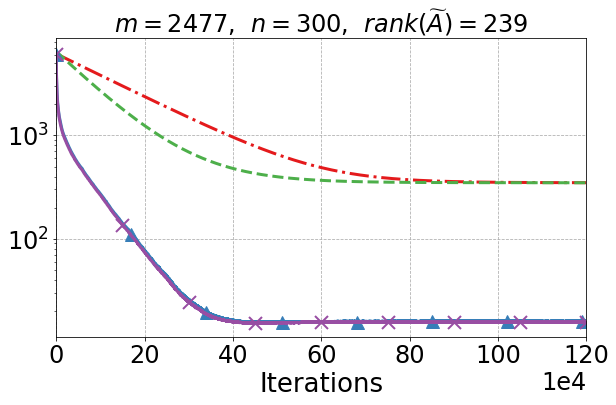} \\ 
    \includegraphics[width=0.44\textwidth]{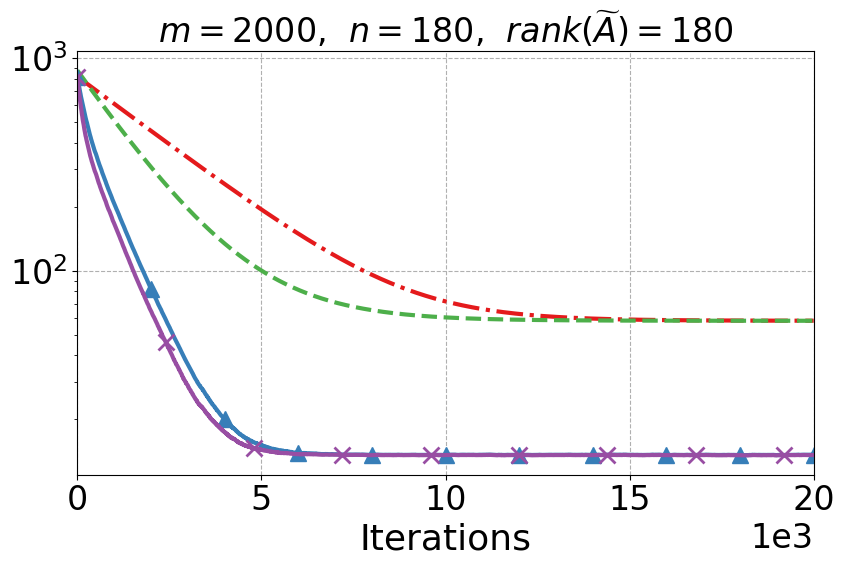}
    \includegraphics[width=0.44\textwidth]{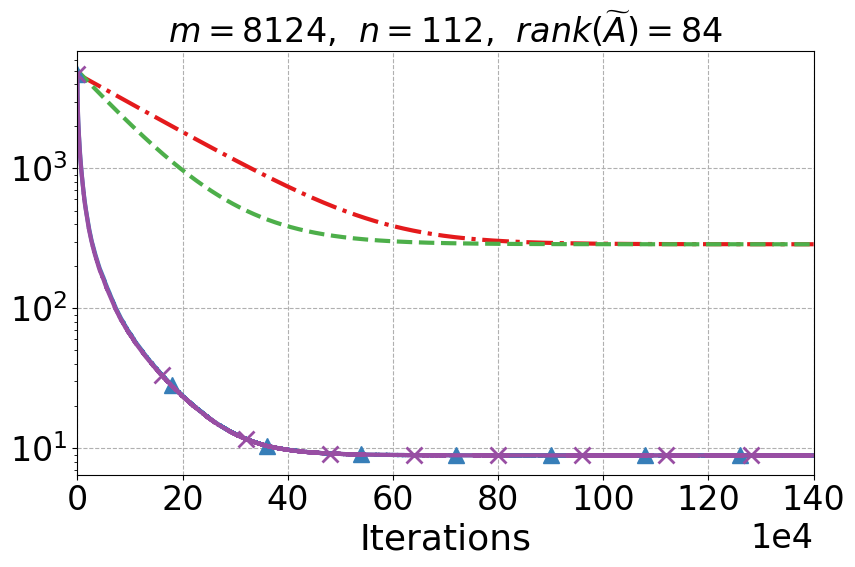}
    \caption{The approximation errors  $\sqrt{\Exp{\|x_k-x_0^n-x_*^r\|^2}}$ and $\|\Exp{x_k}-x_0^n-x_*^r\|$, square root of bound of Theorem \ref{thm:DoublyNoisy_x}, and bound of Corollary \ref{cor:better estimate}. The initial point, $x_0$, and the reference point, $x_*$, are random. First row, left to right: \texttt{a1a}, \texttt{w1a}; second row, left to right:  \texttt{dna}, and \texttt{mushrooms} datasets from LIBSVM \cite{LIBSVM}.} 
  \label{fig:comp_bounds_general}
\end{figure}

In all the figures, the approximation errors are given by averaging over $20$ runs of RK. We use random starting points that ensure that the initial value of the theoretical bounds is much larger than the convergence horizon, so that we can witness both the convergence and horizon behaviors of RK. All codes for the experiments are available at: 
\href{https://github.com/SoumiaBouch/Where-Have-All-the-Kaczmarz-Iterates-Gone}{https://github.com/SoumiaBouch/Where-Have-All-the-Kaczmarz-Iterates-Gone.}

\myNum{i}\smartparagraph{Comparing bounds for noisy linear systems.}
To compare our theoretical bounds in the noisy case, we generate the noisy linear system as follows: $\widetilde{A}=UV$ where $U$ and $V$ are of size $(m,r)$ and $(r,n)$ respectively with i.i.d. Gaussian entries. We set $\widetilde b = y+\beta w$, where $y$ is a random vector from the column space of $\widetilde A$ and $w$ is a random vector of the unit norm from the orthogonal complement of the column space of $\widetilde A$. {In our setup, $\beta>0$ is a scalar controlling the distance of $\widetilde{b}$ from the column space of $\widetilde{A}$ and} we have $\|\widetilde b_{{\rm Col}(\widetilde A)^\perp}\|=\beta$. 

Figures \ref{fig:comp_bounds3_noisy} and \ref{fig:comp_bounds3_noisy_xls} show the approximation errors of RK applied to $\widetilde A x = \widetilde b$ with different values of $\beta$, the square root of bound of Theorem \ref{thm:DoublyNoisy_x} and the bound of Corollary \ref{cor:better estimate}. We do not require the starting point to be in the row space of the noisy matrix for this experiment. Figure \ref{fig:comp_bounds3_noisy} compares the bounds when considering a randomly selected point $x_*$ as the reference point. At the same time, Figure \ref{fig:comp_bounds3_noisy_xls} shows the results when considering $\widetilde x_{\rm LS}$ as a reference point, where $\widetilde x_{\rm LS}=\widetilde{A}^\dagger\widetilde{b}$. The results show that the compared bounds are valid and have the same convergence horizon. 
{$\|\Exp{x_k}-x_0^n-x_*^r\| \leq \sqrt{\Exp{\|x_k-x_0^n-x_*^r\|^2}}$ and in some cases the equality is achieved, which validates Remark \ref{remark:after-error_of_the_mean}. For the tested values, the bound of Corollary \ref{cor:better estimate} exhibits a faster convergence rate than the square root bound in Theorem \ref{thm:DoublyNoisy_x}. Although the difference between the bounds can be large, the difference between the approximation errors is tight.}

Figure~\ref{fig:comp_bounds4_smallestSingularVector} shows the approximation errors for RK applied to the homogeneous system $\widetilde{A}x=0$ and the bounds when the reference point $x_*$ is equal to the smallest right singular vector of $\widetilde{A}$. In this case, both bounds are tight, with the bound of Corollary~\ref{cor:better estimate} showing a better convergence rate than the square root of the bound of Theorem~\ref{thm:DoublyNoisy_x}.

\begin{figure}
    \centering
    \includegraphics[width=0.45\textwidth]{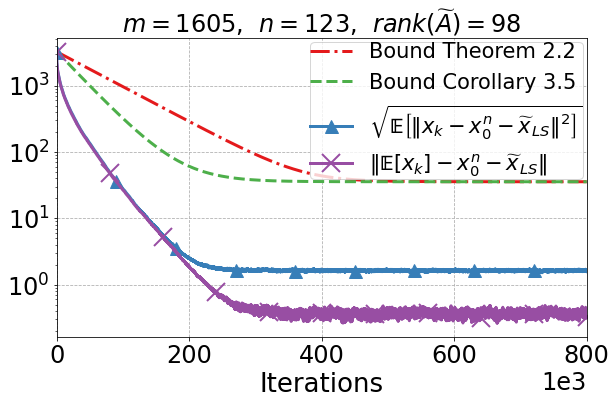}
    \includegraphics[width=0.45\textwidth]{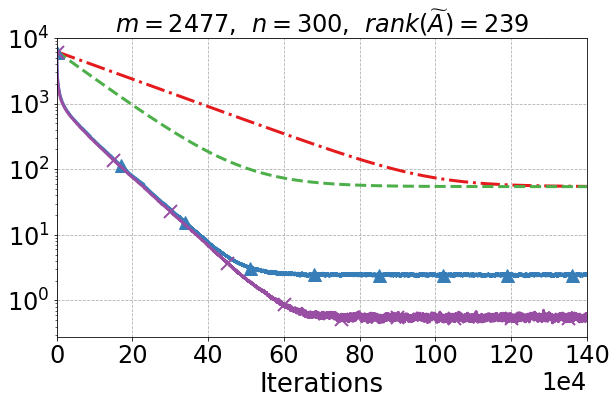}
    \includegraphics[width=0.45\textwidth]{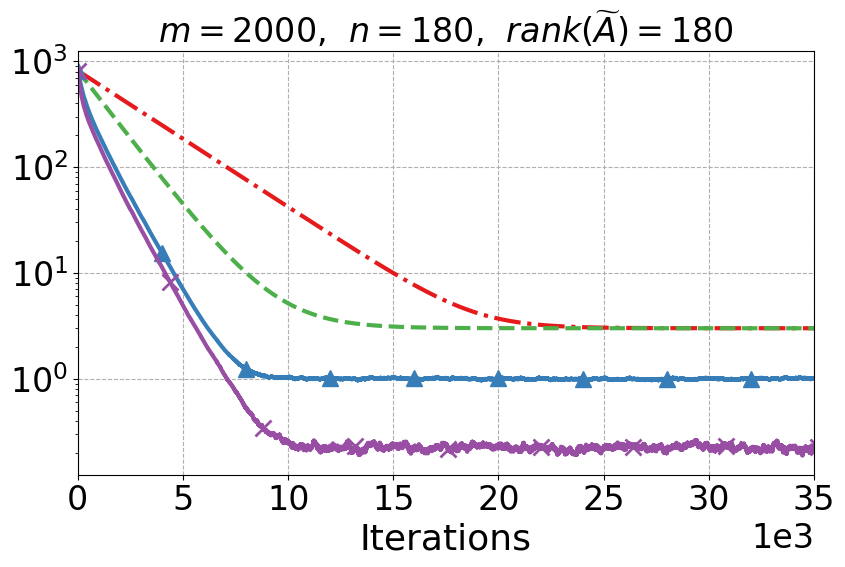}
    \includegraphics[width=0.45\textwidth]{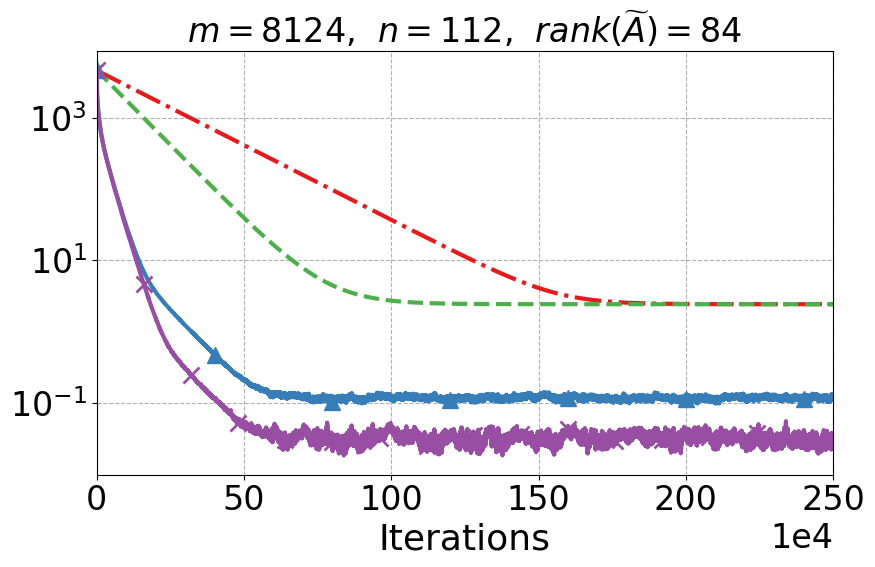}
    \caption{The approximation errors  $\sqrt{\Exp{\|x_k-x_0^n-\widetilde x_{\rm LS}\|^2}}$ and $\|\Exp{x_k}-x_0^n- \widetilde x_{\rm LS}\|$, square root of bound of Theorem \ref{thm:DoublyNoisy_x}, and bound of Corollary \ref{cor:better estimate}. The initial point, $x_0$ is random and $\widetilde{x}_{\rm LS}=\widetilde{A}^\dagger \widetilde{b}$. {First row, left to right: \texttt{a1a}, \texttt{w1a}; second row, left to right:  \texttt{dna}, and \texttt{mushrooms}} datasets from LIBSVM \cite{LIBSVM}.} 
  \label{fig:comp_bounds_xls_tld}
\end{figure}

\myNum{ii}\smartparagraph{Comparing bounds {on real-world data.}}
In these experiments, we compare our bounds of Theorem \ref{thm:DoublyNoisy_x} and Corollary \ref{cor:better estimate} using real-world data $\widetilde A$ and $\widetilde b$ from LIBSVM datasets \cite{LIBSVM}. We consider this data noisy as noise is inevitable in practice. Moreover, the resulting system $\widetilde A x = \widetilde b$ is inconsistent. 

Figures \ref{fig:comp_bounds_general} and \ref{fig:comp_bounds_xls_tld} show the approximation errors of RK applied to $\widetilde A x = \widetilde b$, the square root of the bound of Theorem \ref{thm:DoublyNoisy_x}, and the bound of Corollary \ref{cor:better estimate}. We do not require the starting point to be in the row space of the noisy matrix. 
Figure \ref{fig:comp_bounds_general} shows the results when choosing a random point $x_*$ as a reference point, while Figure \ref{fig:comp_bounds_xls_tld} shows the results when selecting $\widetilde x_{\rm LS}$, the least squares solution of the noisy system, as a reference point. 
Both our derived bounds are valid in this realistic setting, with the bound of Corollary \ref{cor:better estimate} showing a faster convergence rate than the square root bound in Theorem \ref{thm:DoublyNoisy_x}. 
Moreover, choosing $\widetilde{x}_{\rm LS}$ as a reference point yields a smaller convergence horizon compared to a general point $x_*$.

\myNum{iii}\smartparagraph{Empirical validation of Theorem \ref{thm:singular value additive noisy case}.} Figures \ref{fig:Thm3-2_general} and \ref{fig:Thm3-2_xls} show the values of the quantities of the equality of Theorem \ref{thm:singular value additive noisy case} across iterations and for some selected right singular vectors when $x_*$ is arbitrary and when $x_*=\widetilde x_{\rm LS}$. The quantity $(a)= \langle x_{k} - x_0^n-x_*^r,\widetilde v_j \rangle$ is on average empirically equal to $(b)-(c)$  as proven theoretically, where $(b)=\left(1-\mfrac{\widetilde{\sigma}_j^2}{\|\widetilde{A}\|_F^2}\right)^{k}  \langle  x_0^r-x_*^r,\widetilde v_j \rangle$, and $(c)=\left[1- \left(1-\mfrac{\widetilde{\sigma}_j^2}{\|\widetilde A\|_F^2}\right)^k\right] 
 \mfrac{\langle \widetilde{A} x_* - \widetilde{b}, \widetilde u_j \rangle} 
{\widetilde{\sigma}_{j}}$.

\myNum{iv}\smartparagraph{Limiting ball of final RK iterates.}
Figure \ref{fig:circles1} shows the path of RK iterates and randomly selected circles described in Corollaries \ref{cor:center1} and \ref{thm:smallest-ball2}. We test both cases, when the condition $x_0 - x_{\rm LS} \in {\rm range}(\widetilde A ^\top)$ is satisfied and when $x_0$ is arbitrary.
As the result shows, the limiting circles bound the final iterates of RK. 
Additionally, in both cases, the circle centered around $x_0^n+\widetilde x_{\rm LS}$, where $\widetilde x_{\rm LS}=\widetilde A^\dagger \widetilde b$, and of radius $\|\widetilde A \widetilde x_{\rm LS} - \widetilde b \|/ \widetilde \sigma_{\rm min}$ is the one with the smallest radius among the tested values, which validates our theoretical findings stated in Section \ref{sec:bounding_balls}.

\begin{figure}
    \centering
    \includegraphics[width=0.32\textwidth]{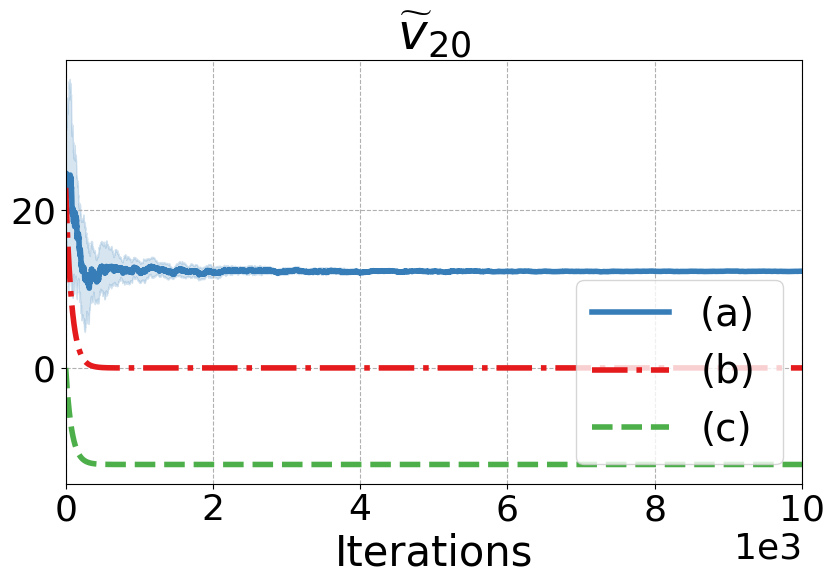}
    \includegraphics[width=0.32\textwidth]{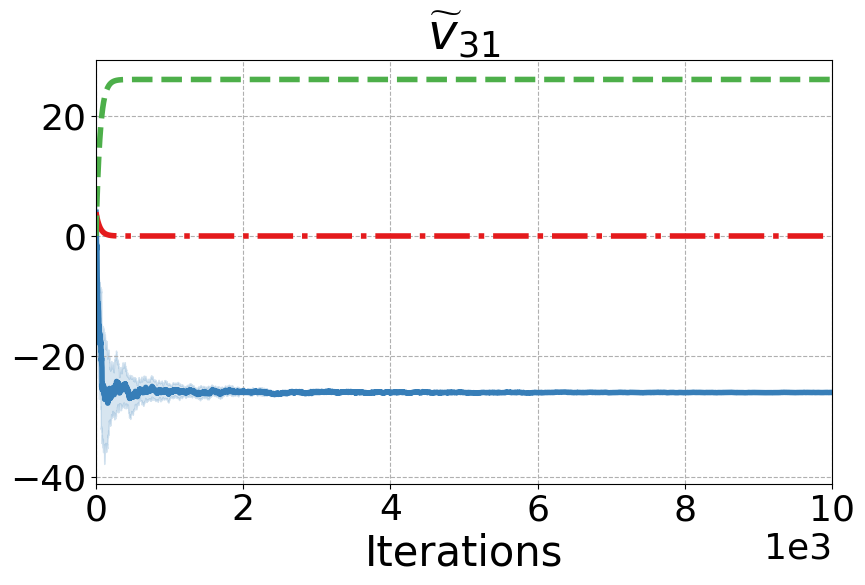}
    \includegraphics[width=0.32\textwidth]{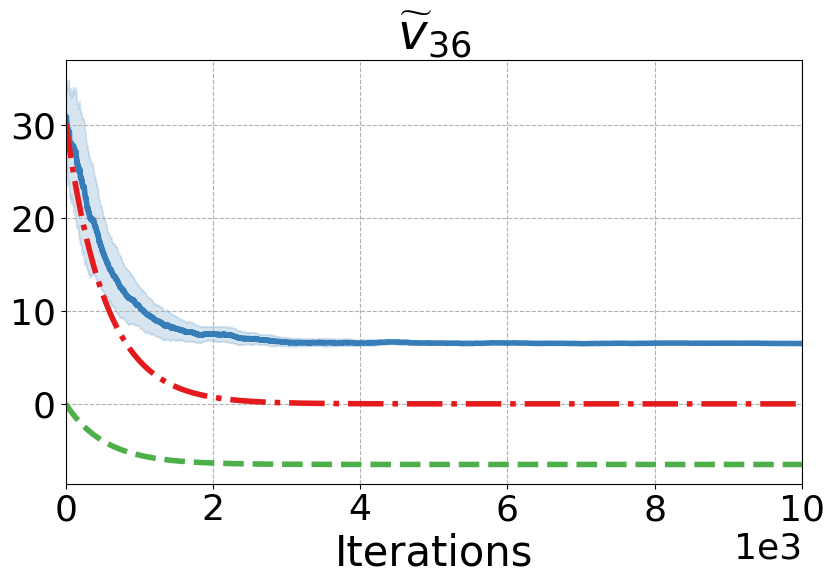}
    \includegraphics[width=0.32\textwidth]{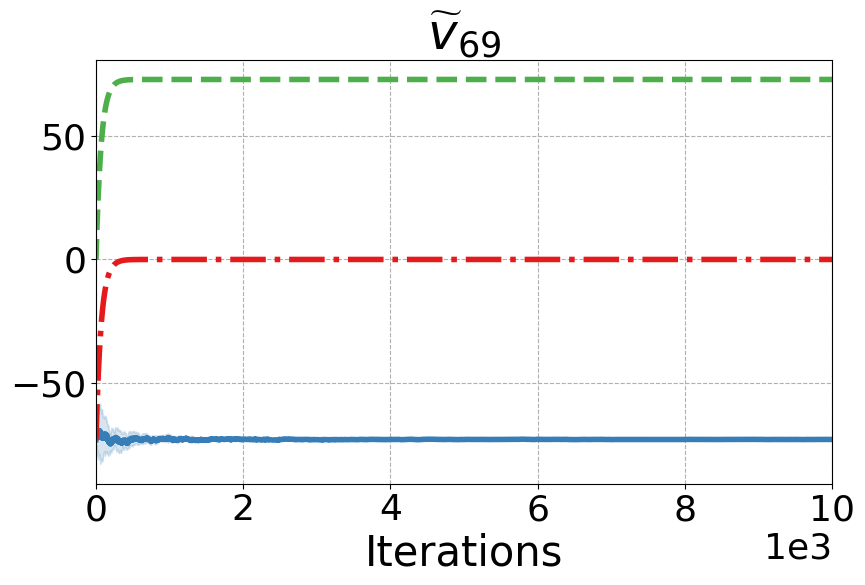}
    \includegraphics[width=0.32\textwidth]{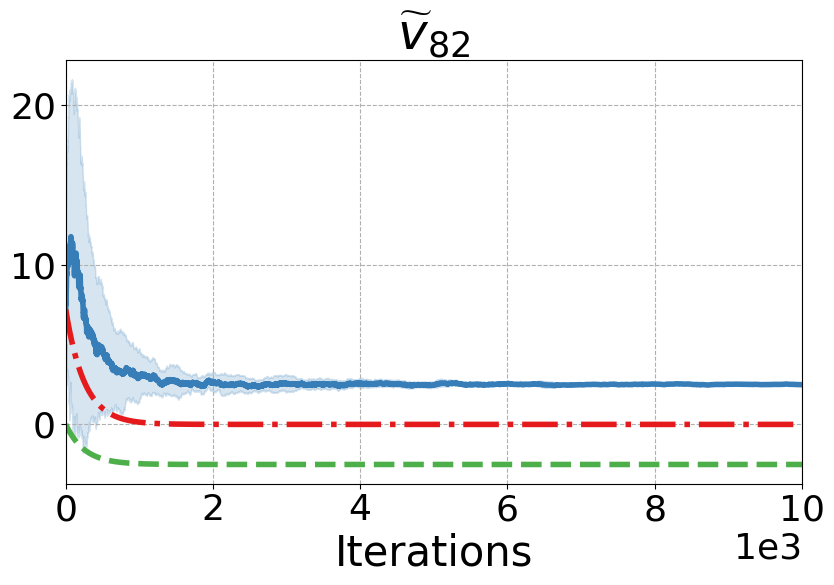}
    \includegraphics[width=0.32\textwidth]{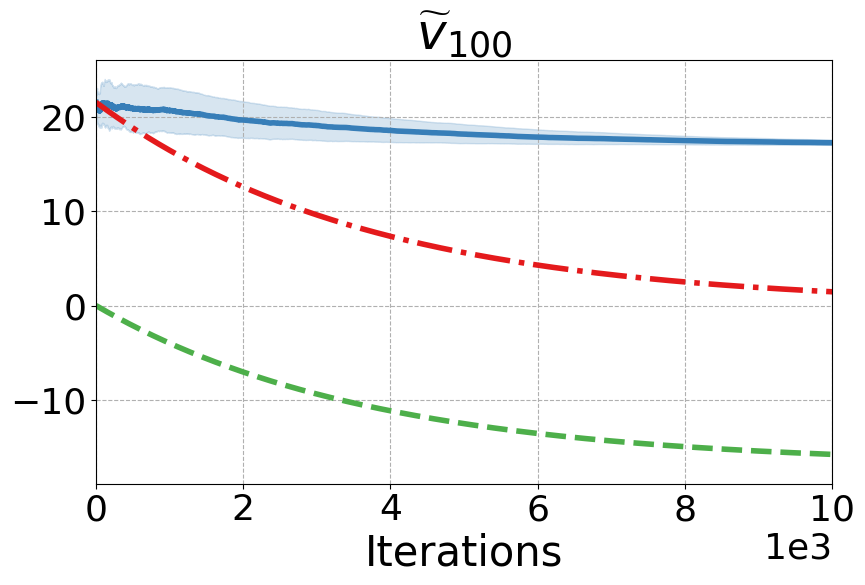}  
 \caption{Quantities of the equality of Theorem \ref{thm:singular value additive noisy case} for multiple right singular vectors. $(a)= \langle x_{k} - x_0^n-x_*^r,\widetilde v_j \rangle$ (averaged over $20$ runs), $(b)=\left(1-\frac{\widetilde{\sigma}_j^2}{\|\widetilde{A}\|_F^2}\right)^{k}  \langle  x_0^r-x_*^r,\widetilde v_j \rangle$, and $(c)=\left[1- \left(1-\mfrac{\widetilde{\sigma}_j^2}{\|\widetilde A\|_F^2}\right)^k\right] 
 \mfrac{\langle \widetilde{A} x_* - \widetilde{b}, \widetilde u_j \rangle} 
{\widetilde{\sigma}_{j}}$. We have $m=1000$, $n=200$, ${\rm rank}(\widetilde A)=100$, $x_0$ and $x_*$ are arbitrary.}
  \label{fig:Thm3-2_general}
\end{figure}

\begin{figure}
    \centering
    \includegraphics[width=0.32\textwidth]{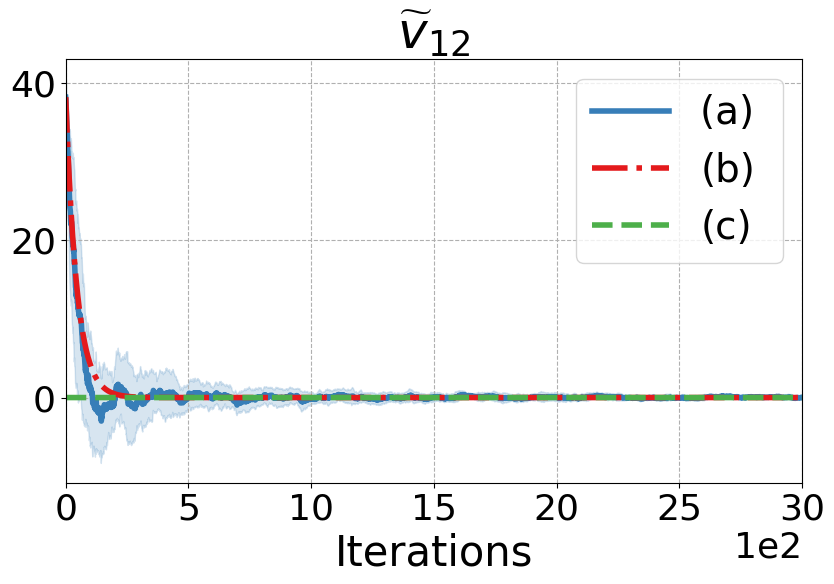}
    \includegraphics[width=0.32\textwidth]{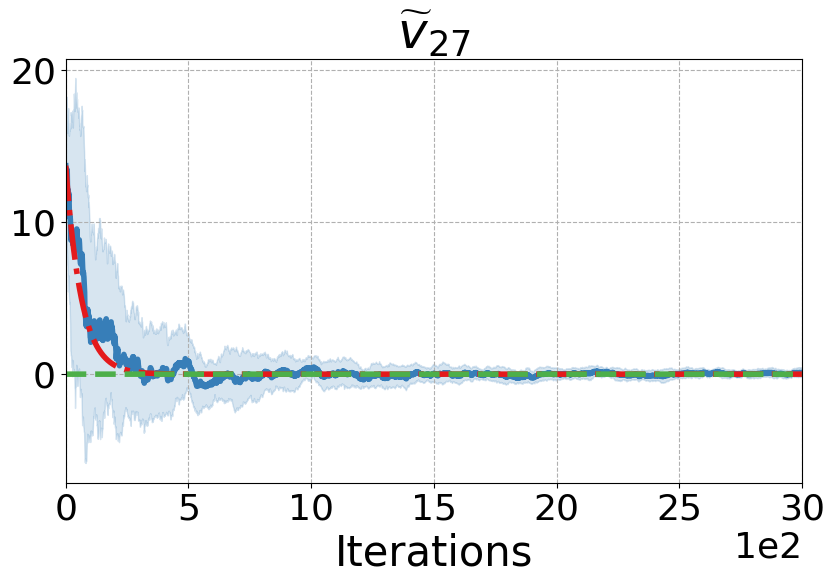}
    \includegraphics[width=0.32\textwidth]{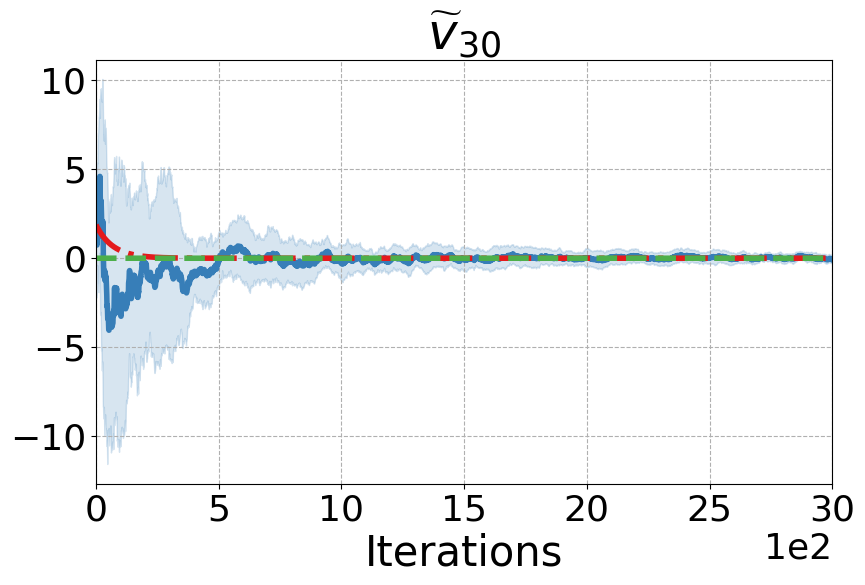}
    \includegraphics[width=0.32\textwidth]{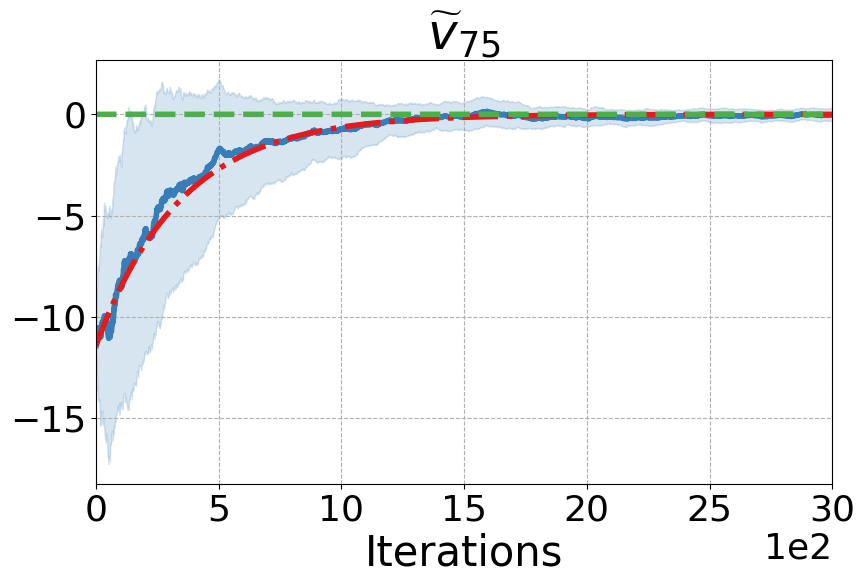}
    \includegraphics[width=0.32\textwidth]{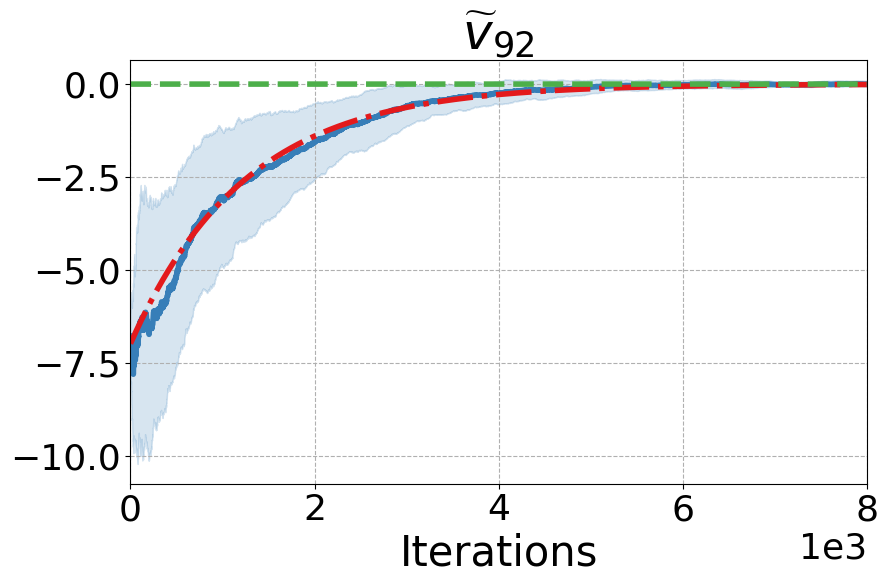}
    \includegraphics[width=0.32\textwidth]{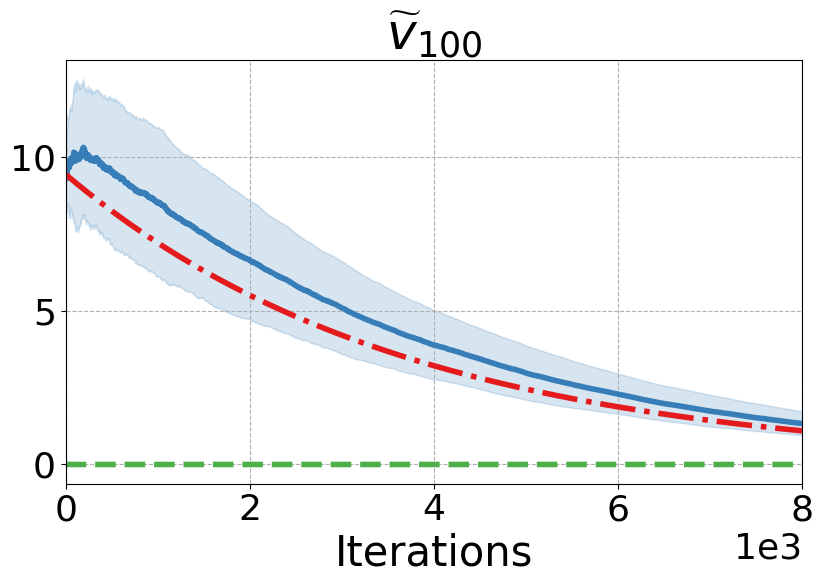}
    \caption{Quantities of the equality of Theorem \ref{thm:singular value additive noisy case} for multiple right singular vectors when $x_*=\widetilde x_{\rm LS}$. $(a)= \langle x_{k} - x_0^n-\widetilde x_{\rm LS},\widetilde v_j \rangle$ (averaged over $20$ runs), $(b)=\left(1-\mfrac{\widetilde{\sigma}_j^2}{\|\widetilde{A}\|_F^2}\right)^{k}  \langle  x_0^r-\widetilde x_{\rm LS},\widetilde v_j \rangle$, and $(c)=\left[1- \left(1-\mfrac{\widetilde{\sigma}_j^2}{\|\widetilde A\|_F^2}\right)^k\right] 
 \mfrac{\langle \widetilde{A} \widetilde x_{\rm LS} - \widetilde{b}, \widetilde u_j \rangle} 
{\widetilde{\sigma}_{j}}$. We have $m=1000$, $n=200$, ${\rm rank}(\widetilde A)=100$, and $x_0$ is arbitrary.}
  \label{fig:Thm3-2_xls}
\end{figure}

\bibliographystyle{plain}
\bibliography{references}

\end{document}